\definecolor{black}{rgb}{0.0, 0.0, 0.0}
\definecolor{red}{rgb}{1.0, 0.5, 0.5}
\newcommand{\margnote}[1]{
\ifthenelse{\boolean{shownotes}}%
{\marginpar{\raggedright\tiny\texttt{#1}}}%
{}%
}
\newcommand{\hole}[1]{
\ifthenelse{\boolean{shownotes}}%
{\begin{center} \fbox{ \rule {.25cm}{0cm} \rule[-.1cm]{0cm}{.4cm}
\parbox{.85\textwidth}{\begin{center} \texttt{#1}\end{center}} \rule
{.25cm}{0cm}}\end{center}} {} }
\title[Compressible Vlasov-Navier-Stokes equations]{A revisit to the large-time behavior of the Vlasov/compressible Navier-Stokes equations}
\author[Choi]{Young-Pil Choi}
\address[Young-Pil Choi]{\newline Fakult\"at f\"ur Mathematik \newline
Technische Universit\"at M\"unchen, Boltzmannstra\ss e 3, 85748, Garching bei M\"unchen, Germany}
\email{ychoi@ma.tum.de}
\numberwithin{equation}{section}
\newtheorem{theorem}{Theorem}[section]
\newtheorem{lemma}{Lemma}[section]
\newtheorem{proposition}{Proposition}[section]
\newtheorem{remark}{Remark}[section]
\newcommand{\R}{\mathbb R}
\newcommand{\T}{\mathbb T}
\newcommand{\bq}{\begin{equation}}
\newcommand{\eq}{\end{equation}}
\newcommand{\e}{\varepsilon}
\newcommand{\lt}{\left}
\newcommand{\rt}{\right}
\newcommand{\pa}{\partial}
\newcommand{\mb}{\mathcal{B}}
\newcommand{\me}{\mathcal{E}}
\newcommand{\md}{\mathcal{D}}
\newcommand{\ml}{\mathcal{L}}
\def\charf {\mbox{{\text 1}\kern-.30em {\text l}}}
\begin{document}
\allowdisplaybreaks

\date{\today}

\subjclass[]{}
\keywords{Vlasov equations; compressible Navier-Stokes equations; interactions between particles and fluid; large-time behavior.}


\begin{abstract}We establish the large-time behavior for the coupled kinetic-fluid equations. More precisely, we consider the Vlasov equation coupled to the compressible isentropic Navier-Stokes equations through a drag forcing term. For this system, the large-time behavior shows the exponential alignment between particles and fluid velocities as time evolves. This improves the previous result by Bae-Choi-Ha-Kang[Disc. Cont. Dyn. Sys. A, 34, (2014), 4419--4458] in which they considered the Vlasov/Navier-Stokes equations with nonlocal velocity alignment forces for particles. Employing a new Lyapunov functional measuring the fluctuations of momentum and mass from the averaged quantities, we refine assumptions for the large-time behavior of the solutions to that system.
\end{abstract}

\maketitle \centerline{\date}


%
%
%
%
\section{Introduction} This paper is concerned with the large-time behavior for the Vlasov equation coupled to the compressible isentropic Navier-Stokes equations through a drag force. More precisely, let $f(x,v,t)$ be the one-particle distribution function at $(x,v)\in \T^N \times \R^N$, $N \geq 3$, and let $\rho(x,t)$ and $u(x,t)$ be the density and the corresponding bulk velocity of the fluid equations at a domain $(x,t) \in \T^N \times \R_+$. Then the Vlasov/compressible Navier-Stokes equations can be described by
\begin{align}\label{main-eq}
\begin{aligned}
&\pa_t f + v \cdot \nabla_x f + \nabla_v \cdot ((u-v)f) = Q(f,f), \quad (x,v,t) \in \T^N \times \R^N \times \R_+,\\[2mm]
&\pa_t \rho + \nabla_x \cdot (\rho u) = 0,\cr
&\pa_t (\rho u) + \nabla_x \cdot (\rho u \otimes u) + \nabla_x p(\rho) + Lu = -\int_{\R^N} (u-v)f \,dv,
\end{aligned}
\end{align}
subject to the initial data:
\bq\label{ini-main-eq}
(f(x,v,t),\rho(x,t),u(x,t))|_{t = 0} = (f_0(x,v),\rho_0(x),u_0(x)).
\eq
Here $Q$ denotes the particle interaction operator, and the pressure law $p$ and Lam\'e operator $L$ are given by 
\begin{align*}
\begin{aligned}
&p(\rho) = \rho^\gamma, \quad \mbox{with} \quad \gamma > 1, \cr
&Lu = -\mu \Delta_x u - (\mu + \lambda)\nabla_x \nabla_x \cdot u, \quad \mbox{with} \quad \mu > 0, \quad 2\mu + \lambda > 0.
\end{aligned}
\end{align*}

In the last years, the interactions between particles and fluid have received a great deal of attentions due to a number of their applications such as biotechnology, medicine, aerosols, and sprays \cite{BBJM,BDM,Will}. We refer to \cite{orou,RM} for more physical backgrounds of the spray model. 

Some of the previous works on the existence theory for the coupled kinetic-fluid equations can be summarized as follows. The first result on the weak solutions to the Vlasov-Stokes equations is performed in \cite{Ham}, and later this work is extended to the Vlasov-Navier-Stokes equations in \cite{BCHK1,BDGM,CL,WY,Yu}. With a diffusion term in the Vlasov equation, i.e., Vlasov-Fokker-Planck equations, there are a number of literature on the global existence of weak and strong solutions for the interactions with homogeneous or inhomogeneous fluids \cite{BCHK2,BD,CCK,CDM,CKL1,CKL2,DL,MV}. For those systems, we can easily find the equilibrium solutions, and this enables to show the large-time behavior of solutions converging to them. More specifically, classical solutions near Maxwellians converging asymptotically to them  are constructed in \cite{CDM,CKL2,DL}. On the other hand, without the diffusion term, it is not clear to find the nontrivial equilibria, and thus the previous arguments for the large-time behavior of solutions can not be applied. Recently, the author and his collaborators developed a new argument for that in \cite{BCHK2,BCHK,CCK,CK1}. In particular, in \cite{BCHK}, the existence of strong solutions and large-time behavior are first established for the system \eqref{main-eq} with nonlocal velocity alignment forces for particles.

In this paper, we revisit to the estimate of large-time behavior of solutions to the system \eqref{main-eq}. By employing a different Lyapunov functional from the one proposed in \cite{BCHK}, we refine assumptions on the solutions and show that the particles will be aligned with the fluid velocity exponentially fast as time evolves. Our strategy can also be applied for the system \eqref{main-eq} with nonlocal/local velocity alignment forces discussed in \cite{BCHK,CCK}(see Remarks \ref{rmk2} and \ref{rmk3} below for details) and two-phase fluid models \cite{Choi, CK2}. In particular, in \cite{CK2}, the a priori asymptotic behavior estimate plays an important role in constructing the global-in-time classical solutions.

Throughout this paper, we use several simplified notations. For a function $f(x,v)$, $\|f\|_{L^p}$ denotes the usual $L^p(\T^N \times \R^N)$-norm, and if $g$ is a function of $x$, the usual $L^p(\T^N)$-norm is denoted by $\|g\|_{L^p}$, unless otherwise specified. We also denote by $C$ a generic positive constant depending only on the norms of the data, but independent of time $t$, and drop $x$-dependence of differential operators $\nabla_x$, that is, $\nabla f := \nabla_x f$ and $\Delta f := \Delta_x f$.

In order to present our main theorem, we first define several averaged quantities which will be used for the estimates of large-time behavior of solutions later. We set $m(x,t) := (\rho u)(x,t)$, 
\[
v_c(t) := \frac{\int_{\T^N \times \R^N} vf\,dxdv}{\int_{\T^N \times \R^N} f\,dxdv}, \quad  m_c(t) := \frac{\int_{\T^N} \rho u\,dx}{\int_{\T^N} \rho \,dx},\quad f_c(t) := \int_{\T^N \times \R^N} f\,dxdv,
\]
and
\[
\rho_c(t):= \int_{\T^N} \rho\,dx.
\]
For the particle interaction operator, we assume that $Q$ satisfies
\bq\label{ass_q}
\int_{\R^N} Q(f,f)\,dv=\int_{\R^N} Q(f,f) v_i\,dv = 0 \quad \mbox{and} \quad \int_{\T^N \times \R^N} Q(f,f) |v|^2\,dxdv \leq 0,
\eq
for $i=1,\cdots, N$.
\begin{theorem}\label{thm_main} Let $(f,\rho,u)$ be any global classical solutions to the system \eqref{main-eq} satisfying
\begin{align*}
\begin{aligned}
&(i)\,\,\,\,\,\, \|\rho_f\|_{L^\infty(\R_+;L^{N/2}(\T^N))} < \infty \quad \mbox{where} \quad \rho_f(x,t) := \int_{\R^N} f(x,v,t)\,dv,\cr
&(ii)\,\,\,\,\rho(x,t) \in [0, \bar\rho] \quad \mbox{for all} \quad (x,t) \in \T^N \times \R_+ \quad \mbox{and} \quad \rho_c(0) > 0,\cr
&(iii)\,\, u \in L^\infty(\T^N \times \R_+) \quad \mbox{and} \quad E_0>0 \mbox{ is small enough},
\end{aligned}
\end{align*}
where $E_0$ is an initial total energy given in Remark \ref{en_rmk1}.
Then we have
\[
\ml(f(t),\rho(t),u(t)) \leq C\ml(f_0,\rho_0,u_0)e^{-Ct} \quad t \geq 0,
\]
where $C$ is a positive constant independent of $t$, and $\ml$ is given by
\bq\label{lyap_eq}
\ml(f,\rho,u) := \int_{\T^N \times \R^N} |v - v_c|^2 f\,dxdv + \int_{\T^N} \rho|u - m_c|^2dx + \int_{\T^N} (\rho - \rho_c)^2dx + |v_c - m_c|^2.
\eq
\end{theorem}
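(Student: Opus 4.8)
The plan is to derive a closed differential inequality for $\ml$ (after augmenting it by a small auxiliary term) and then to conclude by Grönwall. First I would record the conservation laws hidden in the assumptions \eqref{ass_q} on $Q$. Integrating the Vlasov and continuity equations over phase space shows that $f_c$ and $\rho_c$ are constant in time, while testing the Vlasov equation against $v$ and integrating the momentum equation over $\T^N$ (the operator $L$ and $\nabla p$ have vanishing mean on the torus) shows that the total momentum $f_c v_c+\rho_c m_c$ is conserved. This gives the evolution laws $\dot v_c=\tfrac{1}{f_c}\int(u-v)f\,dxdv$ and $\dot m_c=-\tfrac1{\rho_c}\int(u-v)f\,dxdv$. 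Testing the Vlasov equation against $\tfrac12|v|^2$ and combining with the standard fluid energy identity (using $p(\rho)=\rho^\gamma$ and the continuity equation to rewrite the pressure work as a time derivative of the internal energy) yields, for the total energy $E(t):=\tfrac12\int|v|^2f\,dxdv+\tfrac12\int\rho|u|^2\,dx+\tfrac1{\gamma-1}\int\rho^\gamma\,dx$,
\[
\frac{d}{dt}E=-\int_{\T^N\times\R^N}|u-v|^2f\,dxdv-\int_{\T^N}u\cdot Lu\,dx+\frac12\int_{\T^N\times\R^N}|v|^2Q(f,f)\,dxdv,
\]
where the last term is nonpositive and $\int u\cdot Lu\,dx\gs\|\nabla u\|_{L^2}^2$ since $\mu>0$ and $2\mu+\lambda>0$.

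Next I would differentiate each piece of $\ml$. Because $v_c,m_c$ are the mass/momentum-weighted means, the contributions of $\dot v_c,\dot m_c$ drop out, and one finds $\frac{d}{dt}\int|v-v_c|^2f=2\int(v-v_c)\cdot(u-v)f+\int|v|^2Q$ and, after using the momentum equation and cancelling the convective term against the density evolution, $\frac{d}{dt}\int\rho|u-m_c|^2=2\int p(\rho)\nabla\cdot u-2\int u\cdot Lu-2\int(u-m_c)\cdot(u-v)f$. The pressure work is again a perfect time derivative of the relative internal energy, which is comparable to $\int(\rho-\rho_c)^2$ on $[0,\bar\rho]$, so it is absorbed into the density term. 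The key algebraic observation is that all drag contributions, including the one from $\frac{d}{dt}|v_c-m_c|^2$, recombine into the single alignment dissipation $-2\int|u-v|^2f$ together with the explicit damping $-2\int|v-v_c|^2f$ and a negative multiple of $|v_c-m_c|^2$, up to momentum cross terms linear in $(v_c-m_c)\cdot\int(u-m_c)\rho_f\,dx$.

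The third step is to absorb the cross and error terms uniformly in time. Writing $\bar u:=\int_{\T^N}u\,dx$, the fluid fluctuation $\int\rho|u-m_c|^2$ and the particle-weighted fluctuation $\int|u-m_c|^2\rho_f$ are both dominated by the viscous dissipation: using $\rho\le\bar\rho$, the Poincaré inequality $\|u-\bar u\|_{L^2}\ls\|\nabla u\|_{L^2}$, the Sobolev embedding together with assumption (i) through $\int|u-m_c|^2\rho_f\le\|\rho_f\|_{L^{N/2}}\|u-m_c\|_{L^{2N/(N-2)}}^2$, and the identity $\bar u-m_c=\tfrac1{\rho_c}\int(\rho_c-\rho)(u-m_c)\,dx$, one obtains $\int\rho|u-m_c|^2+\int|u-m_c|^2\rho_f\ls\int u\cdot Lu\,dx$ once $\int(\rho-\rho_c)^2$ is small, which is guaranteed by the smallness of $E_0$. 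The residual cross terms are then dispatched by Young's inequality, the smallness of $E_0$ being what keeps all the resulting constants subcritical so that $\int|v-v_c|^2f$, $\int\rho|u-m_c|^2$, $|v_c-m_c|^2$, $\int|u-v|^2f$ and $\int u\cdot Lu\,dx$ acquire strictly negative coefficients.

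The main obstacle is the density fluctuation $\int(\rho-\rho_c)^2$: the continuity equation is pure transport and supplies no damping, so neither $\frac{d}{dt}\int(\rho-\rho_c)^2$ nor the relative internal energy carries a good sign. To create this dissipation I would borrow the pressure–velocity coupling through the auxiliary functional $\mathcal K:=\int_{\T^N}\rho u\cdot\nabla\psi\,dx$, where $\psi$ solves $-\Delta\psi=\rho-\rho_c$ on $\T^N$ with zero mean. Testing the momentum equation against $\nabla\psi$, the pressure term produces $\int\nabla p\cdot\nabla\psi=\int p(\rho)(\rho-\rho_c)\,dx\gs\int(\rho-\rho_c)^2$ (using $\rho\in[0,\bar\rho]$ and the monotonicity of $\rho\mapsto\rho^\gamma$), while the viscous, convective, drag, and $\pa_t\psi$ contributions are all bounded by $\e\int(\rho-\rho_c)^2+C\big(\int u\cdot Lu\,dx+\int|u-v|^2f\big)$ plus terms made small by $E_0$ and $u\in L^\infty$. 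Since $|\mathcal K|\ls\ml$, the combination $\mathcal H:=\ml+\eta\mathcal K$ is equivalent to $\ml$ for $\eta$ small, and the preceding estimates give $\frac{d}{dt}\mathcal H\le -C\mathcal H$; Grönwall's inequality together with $\mathcal H\simeq\ml$ then yields the claimed exponential decay. I expect the delicate points to be the uniform-in-time closure of the cross-term absorption, which forces both the smallness of $E_0$ and the sharp exponent $N/2$ in assumption (i), and the construction of $\mathcal K$ furnishing the missing density dissipation.
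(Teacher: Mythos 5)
Your proposal follows essentially the same route as the paper: the same conservation laws and energy identity, the same replacement of the pressure by the relative internal energy (comparable to $\int_{\T^N}(\rho-\rho_c)^2dx$ for $\rho\in[0,\bar\rho]$), the same absorption of the velocity fluctuations into the viscous dissipation via Poincar\'e--Sobolev and assumption (i) with the $L^{N/2}$ norm of $\rho_f$, and the same device for generating the missing density dissipation --- your auxiliary functional $\int_{\T^N}\rho u\cdot\nabla\psi\,dx$ with $-\Delta\psi=\rho-\rho_c$ is precisely the paper's Bogovskii-type perturbation $\int_{\T^N}\rho(u-m_c)\cdot\mathcal{B}[\rho-\rho_c]\,dx$, since $\mathcal{B}$ is constructed there from the periodic Poisson problem. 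The outline is correct and matches the paper's proof.
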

\begin{remark}\label{rmk1} The smallness condition for the initial total energy $E_0$ is required to have the proper dissipation rate of the density, find the details of it in the proof of Lemma \ref{lem_cri}.
\end{remark}
\begin{remark}\label{rmk2} In \cite{CCK} and \cite{BCHK1, BCHK2,BCHK}, local and nonlocal velocity alignment forces
\[
Q_1(f,f):= - \nabla_v \cdot((u_f-v)f) \quad \mbox{and} \quad  Q_2(f,f) := -\nabla_v \cdot (F(f)f),
\]
are taken into account as the particle interactions, respectively. Here $F(f)$ and $u_f$ represent nonlocal velocity alignment forces and local particle velocity, respectively, given by
\[
F(f)(x,v,t) := \int_{\T^N \times \R^N}\psi(x-y)(w-v)f(y,w,t)\,dydw,
\] 
where $\psi$ is a positive symmetric function and
\[
u_f(x,t) := \frac{\int_{\R^N} vf(x,v,t)\,dv}{\int_{\R^N} f(x,v,t)\,dv}.
\]
We can easily find that those particles interaction operators $Q_i(f,f),i=1,2$ satisfy our assumption \eqref{ass_q}. It is also clear that Boltzmann's collision operator satisfies \eqref{ass_q} due to the fundamental properties of conserving mass, momentum, and energy. We refer to \cite{Math} for the local-in-time existence of smooth solutions for kinetic-fluid equations with collision operators.
\end{remark}
\begin{remark}\label{rmk3} For the system \eqref{main-eq} with nonlocal alignment forces in velocities, i.e., the system \eqref{main-eq} with $Q = Q_2$, a global existence of unique strong solution is studied in \cite{BCHK}. More precisely, for a given $T \in (0,\infty)$, assume that the data $(f_0,\rho_0,u_0)$ satisfy the normalization, compactly supported in velocity, smallness, and non-vacuum conditions:
\begin{align*}
\begin{aligned}
&\mbox{supp}_v (f_0(x,\cdot)) \mbox{ is bounded for all } x \in \T^3,\cr 
& \|f_0\|_{W^{1,\infty}} + \|u_0\|_{H^2} < \e_1, \quad \|\rho_0\|_{H^2} < \e_1^{7/16}, \quad \inf_{x \in \T^3}\rho_0(x) \geq \e_1^{1/2},
\end{aligned}
\end{align*}
where a small positive constant $\e_1$ has a order $\e_1e^{CT} = \mathcal{O}(1)$ and $C$ is a sufficiently large constant independent of $T > 0$. Then there exists a unique strong solution $(f,\rho,u)$ to the system \eqref{main-eq}-\eqref{ini-main-eq} such that
\begin{align*}
\begin{aligned}
& f \in W^{1,\infty}(\T^3 \times \R^3 \times (0,T)), \quad (\rho,\rho_t) \in L^\infty(0,T;H^2(\T^3)) \times L^\infty(0,T;H^1(\T^3)),\cr
& u \in L^\infty(0,T;H^2(\T^3)) \cap L^2(0,T;H^3(\T^3)) \cap H^1(0,T;H^1(\T^3)).
\end{aligned}
\end{align*}
Note that the strategy in \cite{BCHK} for the existence of strong solutions can directly be applied for the system \eqref{main-eq} in three dimensions.
\end{remark}
\begin{remark}\label{rmk4}  For the estimate of large-time behavior of solutions, the following functional $\ml_{ex}(f,\rho,u)$ is considered in \cite{BCHK}: 
\[
\ml_{ex}(f,\rho,u) := \int_{\T^3 \times \R^3} |v - v_c|^2 f\,dxdv + \int_{\T^3} \rho|u - u_c|^2dx + \int_{\T^3} (\rho - \rho_c)^2dx + |v_c - u_c|^2,
\]
where $u_c(t) := \int_{\T^3} u\, dx$, and it is showed that $\ml_{ex}(f,\rho,u)$ exponentially decays to zero under the smallness assumptions on the solutions such as $\|(\rho,u,\rho_f)\|_{L^\infty} \ll 1$. Compared to the work in \cite{BCHK}, we proposed the different Lyapunov functional $\ml(f,\rho,u)$ appeared in \eqref{lyap_eq}, and this enables to have more refined assumptions on the solutions for the large-time behavior estimate. In particular, we do not require the smallness of solutions $\rho,u$, and $\rho_f$ in $L^\infty(\T^N)$. We only need the small total initial energy. We also notice that
\[
\ml(f,\rho,u) \leq C\ml_{ex}(f,\rho,u) \quad \mbox{for some positive constant} \quad C > 0.
\]
\end{remark}
\begin{remark}\label{rmk5} It follows from conservation of masses and total momentum(see Lemma \ref{lem_energy}) that 
\[
v_c(t) - m_c(t) = (f_c(0) + 1)v_c(t) - \frac{1}{\rho_c(0)}\lt(\int_{\T^N \times \R^N} vf_0(x,v)\,dxdv - \int_{\T^N} m_0(x)\,dx\rt).
\]
This yields that 
\[
v_c(t),\,m_c(t) \to \frac{1}{\rho_c(0)\lt(f_c(0) + 1\rt)}\lt(\int_{\T^N \times \R^N} vf_0(x,v)\,dxdv - \int_{\T^N} m_0(x)\,dx\rt),
\]
as $t \to \infty$.
\end{remark}
\begin{remark}\label{rmk6} Theorem \ref{thm_main} implies
\[
\lim_{t \to \infty} d_{BL}\lt(f(x,v,t), \rho_f(x,t)\delta_{v(t)}(v)\rt) = 0,
\]
where $d_{BL}$ denotes the bounded Lipschitz distance. Indeed, if we set 
\[
\mathcal{S} := \lt\{ \varphi:\T^N \times \R^N \to \R~:~ \|\varphi\|_{L^\infty} \leq 1 \mbox{ and } \|\varphi\|_{Lip}:=\sup_{(x,v) \neq (y,w)}\frac{|\varphi(x,v) - \varphi(y,w)|}{|(x,v)-(y,w)|} \leq 1 \rt\},
\]
then for $\varphi \in \mathcal{S}$ we find
$$\begin{aligned}
&\lt|\int_{\T^N \times \R^N} \varphi(x,v)f(x,v,t)\,dxdv - \int_{\T^N \times \R^N} \varphi(x,v)\rho_f(x,t)\delta_{v(t)}(v)\,dxdv\rt|\cr
&\quad = \lt|\int_{\T^N \times \R^N} \varphi(x,v)f(x,v,t)\,dxdv - \int_{\T^N \times \R^N} \varphi(x,v(t))f(x,v,t)\,dxdv\rt|\cr
&\quad \leq \int_{\T^N \times \R^N} |\varphi(x,v) - \varphi(x,v(t))|f(x,v,t)\,dxdv\cr
&\quad \leq \lt(\int_{\T^N \times \R^N}|v-v(t)|^2f(x,v,t)\,dxdv\rt)^{1/2} f_c(0)^{1/2} \to 0 \quad \mbox{as} \quad t \to \infty.
\end{aligned}$$
This and together with Remark \ref{rmk5} further yields
\[
\lim_{t \to \infty} d_{BL}\lt(f(x,v,t), \rho_f(x,t)\delta_{v^\infty}(v)\rt) = 0,
\]
where $v^\infty \in \R^N$ is given by
\[
v^\infty := \frac{1}{\rho_c(0)\lt(f_c(0) + 1\rt)}\lt(\int_{\T^N \times \R^N} vf_0(x,v)\,dxdv - \int_{\T^N} m_0(x)\,dx\rt).
\]
\end{remark}
The rest of the paper is organized as follows. In Section \ref{sec_pre}, we present basic energy estimates for the system \eqref{main-eq} and a type of Bogovskii's result in the periodic domain which will be used later. Section 3 is devoted to the details of the proof for Theorem \ref{thm_main}.
%
%
%
%
\section{Preliminaries}\label{sec_pre}
In this section, we provide several useful lemmas which will be used for the proof of Theorem \ref{thm_main}.

We first show the energy estimates in the lemma below. Since its proof is by now standard, we omit it here.
\begin{lemma}\label{lem_energy} Let $(f,\rho,u)$ be any global classical solutions to the system \eqref{main-eq}-\eqref{ini-main-eq}. Then we have
\begin{align*}
\begin{aligned}
&(i) \,\,\,\,\,\, \frac{d}{dt} \int_{\T^N \times \R^N} f\,dxdv = \frac{d}{dt}\int_{\T^N} \rho\,dx = 0.\cr
&(ii) \,\,\,\, \frac{d}{dt}\lt( \int_{\T^N \times \R^N} v f\,dxdv + \int_{\T^N} \rho u\,dx\rt) = 0.\cr
&(iii)\,\,\frac12\frac{d}{dt}\lt( \int_{\T^N \times \R^N} |v|^2 f\,dxdv + \int_{\T^N} \rho |u|^2 dx + \frac{2}{\gamma - 1}\int_{\T^N} \rho^\gamma dx\rt) \cr
&\qquad + \mu\int_{\T^N} |\nabla u|^2 dx + (\mu + \lambda)\int_{\T^N} |\nabla \cdot u|^2 dx + \int_{\T^N \times \R^N} |u-v|^2 f\,dxdv\cr
&\qquad \quad = \frac12\int_{\T^N \times \R^N}|v|^2 Q(f,f)\,dxdv.
\end{aligned}
\end{align*}
\end{lemma}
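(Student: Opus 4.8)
The plan is to derive (i)--(iii) as the standard moment identities of the system \eqref{main-eq}: parts (i) and (ii) come from testing the Vlasov equation against the collision invariants $1$ and $v_i$ together with integrating the fluid equations, while (iii) is the combined kinetic--fluid energy balance obtained by testing the Vlasov equation against $\frac12|v|^2$ and the momentum equation against $u$. Throughout, every boundary term in $x$ vanishes by periodicity on $\T^N$, and every integration by parts in $v$ is justified by the decay of the classical solution $f$ as $|v| \to \infty$.

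For (i), I would integrate the Vlasov equation over $\T^N \times \R^N$: the transport term $v\cdot\nabla_x f$ integrates to zero by periodicity, the flux term $\nabla_v\cdot((u-v)f)$ integrates to zero in $v$, and $\int_{\T^N \times \R^N} Q(f,f)\,dxdv = 0$ by the first identity in \eqref{ass_q}, giving $\frac{d}{dt}\int f\,dxdv = 0$; integrating the continuity equation over $\T^N$ handles the fluid mass similarly. For (ii), I multiply the Vlasov equation by $v_i$, whence the transport term vanishes by periodicity and the flux term, after integration by parts in $v$, produces $\int(u_i - v_i)f\,dxdv$, while the $Q$-term vanishes by the second identity in \eqref{ass_q}; hence $\frac{d}{dt}\int v_i f\,dxdv = \int(u_i-v_i)f\,dxdv$. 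Integrating the $i$-th momentum equation over $\T^N$, the convective term $\nabla\cdot(\rho u_i u)$, the pressure gradient, and the Lam\'e term $(Lu)_i$ all integrate to zero by periodicity, leaving $\frac{d}{dt}\int\rho u_i\,dx = -\int(u_i-v_i)f\,dxdv$. The crucial structural point is that these two drag contributions are exactly opposite, so summing the two identities yields $\frac{d}{dt}(\int v_i f\,dxdv + \int\rho u_i\,dx) = 0$.

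For (iii), testing the Vlasov equation against $\frac12|v|^2$ and integrating by parts in $v$ gives $\frac{d}{dt}\frac12\int|v|^2 f\,dxdv = \int v\cdot(u-v)f\,dxdv + \frac12\int|v|^2 Q(f,f)\,dxdv$. For the fluid part I would test the momentum equation against $u$ and invoke the continuity equation: the time-derivative and convective terms assemble into $\frac{d}{dt}\frac12\int\rho|u|^2\,dx$, the pressure work $\int u\cdot\nabla p\,dx = -\int\rho^\gamma\,\nabla\cdot u\,dx$ becomes $\frac{d}{dt}\frac{1}{\gamma-1}\int\rho^\gamma\,dx$ after using $\pa_t\rho = -\nabla\cdot(\rho u)$ and integrating by parts, and $\int u\cdot Lu\,dx = \mu\int|\nabla u|^2\,dx + (\mu+\lambda)\int|\nabla\cdot u|^2\,dx$, with drag term $-\int u\cdot(u-v)f\,dxdv$. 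Adding the particle and fluid balances, the drag contributions combine as $\int(v-u)\cdot(u-v)f\,dxdv = -\int|u-v|^2 f\,dxdv$, which I move to the left as the particle--fluid dissipation, producing exactly the identity in (iii).

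The computations are entirely routine; the only point requiring care is the bookkeeping of the coupling drag terms, which must be tracked so that in (ii) they cancel and in (iii) they assemble into the dissipation $\int|u-v|^2 f\,dxdv$. The remaining subtlety, the justification of the integrations by parts in $v$, follows from the regularity and velocity decay of classical solutions, which is why the lemma is stated for such solutions.
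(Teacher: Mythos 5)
Your proof is correct and is precisely the standard moment/energy computation that the paper itself omits with the remark that the argument is ``by now standard'': integration of the Vlasov equation against $1$, $v_i$, $\tfrac12|v|^2$, testing the momentum equation against $u$, and tracking the drag terms so that they cancel in (ii) and assemble into $\int_{\T^N\times\R^N}|u-v|^2 f\,dxdv$ in (iii). All signs, the pressure-work identity $\int_{\T^N} u\cdot\nabla p\,dx = \frac{d}{dt}\frac{1}{\gamma-1}\int_{\T^N}\rho^\gamma dx$, the Lam\'e dissipation, and the use of the assumptions \eqref{ass_q} check out, so nothing further is needed.
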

\begin{remark}\label{en_rmk1}
Set 
\[
E(t):= \int_{\T^N \times \R^N} |v|^2 f\,dxdv + \int_{\T^N} \rho |u|^2 dx + \frac{2}{\gamma - 1}\int_{\T^N} \rho^\gamma dx,
\]
then we obtain
\[
E(t) \leq E(0) =:E_0 \quad \mbox{for} \quad t \geq 0,
\]
due to Lemma \ref{lem_energy} (iii).
\end{remark}
\begin{remark}\label{en_rmk2} It follows from Lemma \ref{lem_energy} (i) that
\[
\int_{\T^N \times \R^N} f\,dxdv = \int_{\T^N \times \R^N} f_0\,dxdv \quad \mbox{and} \quad \int_{\T^N} \rho\,dx = \int_{\T^N} \rho_0\,dx \quad \mbox{for} \quad t \geq 0,
\]
i.e., $f_c(t) = f_c(0)$ and $\rho_c(t) = \rho_c(0)$ for all $t \geq 0$. For notational simplicity, we set $f_c := f_c(0)$ and $\rho_c := \rho_c(0)$.
\end{remark}

We next recall the elliptic regularity estimate for Poisson's equation. 

\begin{lemma}\label{lem_poi} \cite[Lemma 7.9]{Majda} Let $f$ be a periodic function with zero average, i.e., $\int_{\T^N} f\,dx = 0$. Then the unique solution $\phi$ up to an additive constant of the Poisson equation 
\[
-\Delta \phi = f,
\]
satisfies the elliptic regularity estimate
\[
\|\phi\|_{H^{s+1}} \leq C\|f\|_{H^{s-1}}, \quad \mbox{for some positive constant } C >0.
\]
\end{lemma}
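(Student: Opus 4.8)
The plan is to prove this by Fourier series on the torus, which diagonalizes the Poisson equation and reduces the regularity estimate to a pointwise bound on its symbol. Writing $f(x) = \sum_{k \in \mathbb{Z}^N} \hat f(k)\,e^{ik\cdot x}$, the zero-average hypothesis $\int_{\T^N} f\,dx = 0$ is precisely the statement $\hat f(0) = 0$. I would work throughout with the Fourier characterization of the periodic Sobolev norms,
\[
\|g\|_{H^s}^2 = \sum_{k \in \mathbb{Z}^N} (1+|k|^2)^s\,|\hat g(k)|^2,
\]
valid for every real $s$, and first settle existence and uniqueness, then the quantitative bound.

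For existence and uniqueness, taking Fourier coefficients in $-\Delta\phi = f$ gives $|k|^2\hat\phi(k) = \hat f(k)$ for each $k$. For $k \neq 0$ this determines $\hat\phi(k) = \hat f(k)/|k|^2$ uniquely, while for $k = 0$ the equation reads $0 = \hat f(0)$, which holds exactly by the zero-average condition and leaves $\hat\phi(0)$ entirely free. This freedom in the zero mode is precisely the additive constant in the statement, so $\phi$ is unique up to that constant. To obtain the full (inhomogeneous) $H^{s+1}$ bound, I would normalize by choosing the zero-mean representative $\hat\phi(0) = 0$; the $k=0$ term then drops out and the norm coincides with its homogeneous counterpart.

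For the estimate itself I would compute
\[
\|\phi\|_{H^{s+1}}^2 = \sum_{k \neq 0} (1+|k|^2)^{s+1}\,\frac{|\hat f(k)|^2}{|k|^4},
\]
and compare termwise with $\|f\|_{H^{s-1}}^2 = \sum_{k \neq 0}(1+|k|^2)^{s-1}|\hat f(k)|^2$. The whole estimate reduces to the pointwise symbol bound
\[
\frac{(1+|k|^2)^{s+1}}{|k|^4} \leq C\,(1+|k|^2)^{s-1}, \qquad k \in \mathbb{Z}^N \setminus \{0\},
\]
that is, to $(1+|k|^2)^2/|k|^4 = (1 + |k|^{-2})^2 \leq C$. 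Since $k$ ranges over nonzero integer vectors, $|k|^2 \geq 1$, so the left side is at most $4$ and $C=4$ suffices. Summing this termwise inequality over $k \neq 0$ gives $\|\phi\|_{H^{s+1}} \leq C\|f\|_{H^{s-1}}$.

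The argument carries no real obstacle once the Fourier picture is in place; the only point demanding care is the zero mode. The spectral gap $|k|^2 \geq 1$ on $\T^N$—the absence of a low-frequency accumulation, in contrast to $\R^N$—is exactly what keeps the inverse Laplacian bounded between these inhomogeneous spaces, and fixing the additive constant to the zero-mean choice is what upgrades the homogeneous seminorm bound to the full $H^{s+1}$ norm in the statement. A variational alternative, namely Lax--Milgram for a weak $H^1$ solution followed by a regularity bootstrap, would also succeed, but on the periodic domain the Fourier route is cleaner and delivers the sharp two-derivative gain directly.
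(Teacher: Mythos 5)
Your Fourier-series argument is correct: the zero-average hypothesis is exactly $\hat f(0)=0$, the spectral gap $|k|\geq 1$ on $\T^N$ gives the pointwise symbol bound $(1+|k|^2)^2/|k|^4\leq 4$, and normalizing $\hat\phi(0)=0$ correctly upgrades the homogeneous bound to the full $H^{s+1}$ norm while accounting for the additive constant. The paper gives no proof of its own here, citing \cite[Lemma 7.9]{Majda}, and your argument is precisely the standard proof behind that citation, so there is nothing to compare beyond noting the match.
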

Then as a direct consequence of Lemma \ref{lem_poi}, we have a  type of Bogovskii's result in the periodic domain. We refer to \cite{Bogo,Gal} for the general bounded domain.

\begin{proposition}\label{prop_bogo}Given any $f \in L^2_{\#}(\T^N) := \Big\{ f \in L^2(\T^N) | \int_{\T^N} f dx = 0
\Big\}$, the following stationary transport equation
\bq\label{tran_eq}
\nabla \cdot \nu = f, \quad \nabla \times \nu = 0, \quad \mbox{and} \quad \int_{\T^N} \nu\,dx = 0,
\eq
admits a solution operator $\mb: f \mapsto \nu$ satisfying the following properties:
\begin{enumerate}
\item $\nu = \mb[f]$ is a solution to the problem \eqref{tran_eq} and a linear operator from $L^2_{\#}(\T^N)$ into $H^1(\T^N)$, i.e.,
\[
\|\mathcal{B}[f] \|_{H^1(\T^N)} \leq C\| f \|_{L^{2}(\T^N)}.
\]

\item If a function $f \in H^1(\T^N)$ can be written in the form
$f = \nabla \cdot g$ with $g \in [H^1(\T^N)]^N$,
    then
    \[
    \| \mathcal{B}[f] \|_{L^{2}(\T^N)} \leq C\|g\|_{L^{2}(\T^N)}.
    \]
\end{enumerate}
\end{proposition}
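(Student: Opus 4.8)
The plan is to reduce the first-order system \eqref{tran_eq} to a scalar Poisson problem by exploiting the curl-free constraint, and then to read off both estimates directly from the elliptic theory of Lemma \ref{lem_poi}. Concretely, given $f \in L^2_{\#}(\T^N)$, I would let $\phi$ be the unique zero-average solution of
\[
-\Delta \phi = -f \quad \mbox{on } \T^N,
\]
which exists by Lemma \ref{lem_poi} precisely because $f$ has zero average, and then \emph{define} $\mb[f] := \nabla \phi$. Since the curl of a gradient vanishes identically and the integral of a gradient over the torus is zero, the field $\nu = \nabla\phi$ automatically satisfies $\nabla \times \nu = 0$ and $\int_{\T^N}\nu\,dx = 0$; moreover $\nabla \cdot \nu = \Delta\phi = f$, so $\nu$ solves \eqref{tran_eq}. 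Linearity of $\mb$ then follows from the linearity of the Poisson solution operator composed with the gradient.

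For property (1), I would apply Lemma \ref{lem_poi} with $s = 1$ to obtain $\|\phi\|_{H^2} \leq C\|f\|_{L^2}$, whence
\[
\|\mb[f]\|_{H^1} = \|\nabla\phi\|_{H^1} \leq \|\phi\|_{H^2} \leq C\|f\|_{L^2},
\]
which is the asserted bound.

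For property (2), suppose $f = \nabla \cdot g$ with $g \in [H^1(\T^N)]^N$. The key step is a single energy identity: testing $\Delta\phi = \nabla \cdot g$ against $\phi$ and integrating by parts on the torus (there are no boundary terms) gives
\[
\int_{\T^N} |\nabla\phi|^2\,dx = \int_{\T^N} g \cdot \nabla\phi\,dx \leq \|g\|_{L^2}\|\nabla\phi\|_{L^2},
\]
so that $\|\mb[f]\|_{L^2} = \|\nabla\phi\|_{L^2} \leq \|g\|_{L^2}$, exactly the claimed estimate. Note this uses only $g \in L^2$, consistent with the stated hypothesis.

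The construction is essentially automatic once one passes to the potential $\phi$, and I do not expect a serious obstacle beyond bookkeeping. The one point that genuinely requires care is verifying that the curl-free and mean-zero constraints in \eqref{tran_eq} are simultaneously met, which is where the gradient ansatz $\nu = \nabla\phi$ is essential: on the torus a general curl-free field may carry a nonzero harmonic (constant) component, and it is precisely the zero-average normalization built into \eqref{tran_eq} that renders the gradient ansatz consistent. With that observation in hand, both estimates are immediate consequences of Lemma \ref{lem_poi} and one integration by parts.
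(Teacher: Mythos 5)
Your construction $\mb[f] = \nabla\phi$ with $\Delta\phi = f$ is exactly the intended argument: the paper presents Proposition \ref{prop_bogo} as a direct consequence of the elliptic estimate in Lemma \ref{lem_poi}, which is precisely the gradient-of-potential route you take. Both estimates check out (the $H^1$ bound from $\|\phi\|_{H^2}\leq C\|f\|_{L^2}$ with $s=1$, and the $L^2$ bound from the energy identity $\int_{\T^N}|\nabla\phi|^2\,dx = \int_{\T^N} g\cdot\nabla\phi\,dx$), so the proof is correct and essentially the same as the paper's.
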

%
%
%
%
\section{Proof of Theorem \ref{thm_main}}

Before we deal with the Lyapunov function $\ml(f,\rho,u)$, we first provide some elementary estimates on the averaged quantities $m_c$ and $v_c$.
\begin{lemma}\label{lem_el} The followings hold.
\begin{align*}
\begin{aligned}
&(i) \,\,\,\,\,\, |m_c(t)|^2  \leq E_0/\rho_c \quad \mbox{and} \quad |m_c'(t)|^2 \leq f_c/\rho_c\int_{\T^N \times \R^N} |u-v|^2 f\,dxdv.\cr
&(ii) \,\,\,\, v_c'(t) = -(\rho_c/f_c) \,m_c'(t).
\end{aligned}
\end{align*}
where $'$ denotes the time-derivative, i.e., $g'(t) := \frac{d}{dt}g(t)$.
\end{lemma}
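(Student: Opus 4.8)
The plan is to establish the three estimates separately; each follows from an elementary integration together with Cauchy--Schwarz, the conservation laws of Lemma \ref{lem_energy}, and the energy bound $E(t) \le E_0$ from Remark \ref{en_rmk1}. Throughout I would use $\rho_c(t) \equiv \rho_c$ and $f_c(t) \equiv f_c$ from Remark \ref{en_rmk2}, so that $m_c = \rho_c^{-1}\int_{\T^N}\rho u\,dx$ and $f_c v_c = \int_{\T^N\times\R^N} vf\,dxdv$.

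For the first bound in (i), I would factor $\rho u = \sqrt{\rho}\,(\sqrt{\rho}\,u)$ and apply Cauchy--Schwarz to obtain $|\int_{\T^N}\rho u\,dx| \le \rho_c^{1/2}(\int_{\T^N}\rho|u|^2\,dx)^{1/2}$. Dividing by $\rho_c$ and squaring gives $|m_c|^2 \le \rho_c^{-1}\int_{\T^N}\rho|u|^2\,dx \le \rho_c^{-1}E(t) \le E_0/\rho_c$, where the middle inequality just drops the nonnegative kinetic and pressure contributions to $E(t)$. For the second bound, I first need an expression for $m_c'$: integrating the momentum equation in \eqref{main-eq} over the torus $\T^N$ annihilates the convective flux $\nabla\cdot(\rho u\otimes u)$, the pressure gradient $\nabla p(\rho)$, and the Lam\'e term $Lu$, since each is a pure divergence or gradient and the domain is periodic. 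This leaves $\frac{d}{dt}\int_{\T^N}\rho u\,dx = -\int_{\T^N\times\R^N}(u-v)f\,dxdv$, hence $m_c' = -\rho_c^{-1}\int_{\T^N\times\R^N}(u-v)f\,dxdv$. Splitting $(u-v)f = \big((u-v)\sqrt{f}\big)\sqrt{f}$, applying Cauchy--Schwarz, and using $\int_{\T^N\times\R^N} f\,dxdv = f_c$ then yields the stated estimate for $|m_c'|^2$.

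For (ii) I would avoid redoing the Vlasov moment computation and instead invoke conservation of total momentum directly, namely Lemma \ref{lem_energy}(ii): $\frac{d}{dt}\big(\int_{\T^N\times\R^N} vf\,dxdv + \int_{\T^N}\rho u\,dx\big)=0$. Rewriting the two integrals as $f_c v_c$ and $\rho_c m_c$ respectively, with $f_c,\rho_c$ constant, and differentiating in $t$ gives $f_c v_c' + \rho_c m_c' = 0$, which is precisely $v_c' = -(\rho_c/f_c)\,m_c'$.

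I do not expect a genuine obstacle here, as the lemma is essentially bookkeeping on the conservation structure. The only points deserving care are confirming that the divergence-type and Lam\'e terms integrate to zero on $\T^N$ so that the drag term $-\int(u-v)f\,dxdv$ is the sole surviving contribution to $\frac{d}{dt}\int\rho u\,dx$, and keeping the weights $\rho$ and $f$ consistent through the two Cauchy--Schwarz steps so that the bounds come out in terms of $\int\rho|u|^2\,dx$ and $\int|u-v|^2 f\,dxdv$ rather than mismatched quantities.
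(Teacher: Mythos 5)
Your proposal is correct and follows essentially the same route as the paper: Cauchy--Schwarz with the weight $\rho$ (resp.\ $f$) for the two bounds in (i), the observation that only the drag term survives integration of the momentum equation over $\T^N$, and conservation of total momentum from Lemma \ref{lem_energy}(ii) combined with the constancy of $f_c$ and $\rho_c$ for (ii). The only cosmetic difference is that you spell out explicitly why the convective, pressure, and Lam\'e terms vanish upon integration, which the paper leaves implicit.
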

\begin{proof} (i) Using H\"older's inequality together with Lemma \ref{lem_energy} yields
\begin{align*}
\begin{aligned}
|m_c(t)| &\leq \frac{\int_{\T^N} \rho|u|dx}{\int_{\T^N} \rho\,dx} \leq \frac{1}{\rho_c}\lt(\int_{\T^N} \rho|u|^2 dx\rt)^\frac12\lt( \int_{\T^N} \rho\,dx\rt)^\frac12 \leq \rho_c^{-\frac12}E_0^\frac12,\cr
|m_c'(t)| & \leq \frac{1}{\rho_c}\lt|\frac{d}{dt}\int_{\T^N}\rho u\,dx\rt| \leq \frac{1}{\rho_c}\int_{\T^N} |u- v|f\,dxdv \cr
&\leq \frac{1}{\rho_c}\lt(\int_{\T^N \times \R^N} |u-v|^2 f\,dxdv\rt)^\frac12\lt(\int_{\T^N \times \R^N} f\,dxdv\rt)^\frac12\cr
&\leq (f_c/\rho_c)^\frac12\lt(\int_{\T^N \times \R^N} |u-v|^2 f\,dxdv\rt)^\frac12.
\end{aligned}
\end{align*} 
(ii) It follows from Lemma \ref{lem_energy} (ii) that
\[
f_c v_c'(t) + \rho_c m_c'(t) = 0, \quad \mbox{i.e.,} \quad v_c'(t) = \frac{\rho_c}{-f_c} m_c'(t).
\]
\end{proof}

We set temporal interacting energy functional $\mathcal{E}(f,\rho,u)$ and its corresponding dissipation $\mathcal{D}(f,\rho,u)$ as follows.
\begin{align}\label{t_energy}
\begin{aligned}
\mathcal{E}(f,\rho,u)&:= \int_{\T^N \times \R^N} |v - v_c|^2 f\,dxdv + \int_{\T^N} \rho|u - m_c|^2 dx + \frac{2}{\gamma-1}\int_{\T^N} \rho^\gamma dx \cr
&\quad + \frac{f_c}{2(f_c + \rho_c)}|m_c - v_c|^2,\cr
\mathcal{D}(f,\rho,u)&:= \mu\int_{\T^N} |\nabla u|^2 dx + (\mu + \lambda)\int_{\T^N} |\nabla \cdot u|^2 dx + \int_{\T^N \times \R^N} |u-v|^2 f\,dxdv.
\end{aligned}
\end{align}
\begin{remark}Our strategy for the estimate of large-time behavior heavily relies on that the system \eqref{main-eq} conserve the total momentum, see Lemma \ref{lem_energy} (ii). Thus, apart from the requirement of higher regularities of solutions appeared in Theorem \ref{thm_main}, we can not employ the above Lyapunov functional $\me$ and the dissipation $\md$ with weak solutions to the system \eqref{main-eq}. For weak solutions, the energy estimate in Lemma \ref{lem_energy} (iii) implies
\[
\lim_{t \to \infty} \int_t^{t+1}\int_{\T^N}\rho_f|u_f - u|^2\,dxds = \lim_{t \to \infty} \int_t^{t+1}\int_{\T^N \times \R^N}|u_f - v|^2f\,dxdvds = 0,
\]
since 
\[
\int_{\R^N} |u - v|^2 f\,dv = \rho_f |u_f - u|^2 + \int_{\R^N} |u_f - v|^2 f\,dv.
\]

\end{remark}
In the lemma below, we show that the temporal interacting energy functional is not increasing in time.
\begin{lemma}Consider the energy functional $\mathcal{E}$ and the dissipation $\mathcal{D}$ given in \eqref{t_energy}. Then we have
\[
\frac12\frac{d}{dt}\mathcal{E}(f,\rho,u) + \mathcal{D}(f,\rho,u) \leq 0.
\]
\end{lemma}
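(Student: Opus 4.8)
The plan is to differentiate the four pieces of $\me$ in \eqref{t_energy} one at a time, substitute the equations \eqref{main-eq}, and show that after integrating by parts every term either reproduces $-2\md$, is manifestly nonpositive, or is absorbed by the derivative of the last term $\frac{f_c}{2(f_c+\rho_c)}|m_c-v_c|^2$. Throughout I will use that $f_c$ and $\rho_c$ are constant in time (Remark \ref{en_rmk2}) and the two averaged-quantity identities from Lemma \ref{lem_el}.

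For the kinetic part $\int_{\T^N\times\R^N}|v-v_c|^2 f\,dxdv$, differentiating the weight produces $-2v_c'\cdot\int_{\T^N\times\R^N}(v-v_c)f\,dxdv$, which vanishes by the very definition of $v_c$. In $\int|v-v_c|^2\pa_t f$ I insert the Vlasov equation: the transport term drops by periodicity since the weight is $x$-independent, the friction term gives, after integrating by parts in $v$, exactly $2\int_{\T^N\times\R^N}(v-v_c)\cdot(u-v)f\,dxdv$, and the collision term reduces to $\int_{\T^N\times\R^N}|v|^2 Q(f,f)\,dxdv\le 0$ once the first two identities in \eqref{ass_q} are used to discard the contributions linear in and constant in $v_c$.

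For the fluid part $\int_{\T^N}\rho|u-m_c|^2\,dx+\frac{2}{\gamma-1}\int_{\T^N}\rho^\gamma\,dx$, I set $w:=u-m_c$ and use the continuity equation for $\pa_t\rho$ together with the non-conservative momentum equation obtained from \eqref{main-eq}. The convective contributions $\rho(u\cdot\nabla)u$ cancel, and the term carrying $m_c'$ disappears because $\int_{\T^N}\rho w\,dx=\int_{\T^N}\rho(u-m_c)\,dx=0$. After integration by parts the pressure term becomes $2\int_{\T^N}(\nabla\cdot u)\rho^\gamma\,dx$, which is cancelled by $\frac{d}{dt}\frac{2}{\gamma-1}\int_{\T^N}\rho^\gamma\,dx=-2\int_{\T^N}\rho^\gamma\,\nabla\cdot u\,dx$; the Lam\'e term yields $-2\mu\int_{\T^N}|\nabla u|^2\,dx-2(\mu+\lambda)\int_{\T^N}|\nabla\cdot u|^2\,dx$; and the drag term gives $-2\int_{\T^N\times\R^N}(u-m_c)\cdot(u-v)f\,dxdv$.

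The step I expect to be the crux is combining the two drag cross-terms. Adding $2\int(v-v_c)\cdot(u-v)f$ and $-2\int(u-m_c)\cdot(u-v)f$ and regrouping the pointwise factor as $(v-v_c)-(u-m_c)=-(u-v)+(m_c-v_c)$ produces $-2\int_{\T^N\times\R^N}|u-v|^2f\,dxdv+2(m_c-v_c)\cdot\int_{\T^N\times\R^N}(u-v)f\,dxdv$, where the first piece completes the particle dissipation inside $\md$. For the second piece I use conservation of total momentum: integrating the momentum equation over $\T^N$ gives $\int_{\T^N\times\R^N}(u-v)f\,dxdv=-\rho_c m_c'$ (cf. Lemma \ref{lem_energy} (ii) and the proof of Lemma \ref{lem_el} (i)), so the leftover depends only on $m_c-v_c$ and $m_c'$. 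Finally, the relation $v_c'=-(\rho_c/f_c)m_c'$ from Lemma \ref{lem_el} (ii) shows $m_c'-v_c'=\frac{f_c+\rho_c}{f_c}m_c'$, so this leftover is exactly a constant multiple of $\frac{d}{dt}|m_c-v_c|^2$; the coefficient of $|m_c-v_c|^2$ in $\me$ is chosen to be precisely the value that makes the derivative of that last term cancel it. Collecting everything yields $\frac{d}{dt}\me=-2\md+\int_{\T^N\times\R^N}|v|^2Q(f,f)\,dxdv$, and halving and invoking the last inequality in \eqref{ass_q} gives $\frac12\frac{d}{dt}\me+\md\le 0$. The only place where care is needed is tracking the exact constant on $|m_c-v_c|^2$, since the cancellation of this momentum-coupling cross-term is exact only for one specific choice of that constant and otherwise leaves a sign-indefinite remainder.
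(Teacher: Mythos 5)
Your proposal is correct and follows essentially the same route as the paper: the same term-by-term differentiation, the same use of \eqref{ass_q} to reduce the collision contribution to $\int|v|^2Q\le 0$, the same regrouping $(v-v_c)-(u-m_c)=-(u-v)+(m_c-v_c)$ producing the full drag dissipation plus the cross term $(m_c-v_c)\cdot\int(u-v)f\,dxdv$, and the same cancellation of that cross term against the derivative of the $|m_c-v_c|^2$ piece via $v_c'=-(\rho_c/f_c)m_c'$. Your closing caveat about the exact constant is well placed: since $m_c'=-\rho_c^{-1}\int(u-v)f\,dxdv$, the coefficient that makes the cancellation exact is $\frac{f_c\rho_c}{f_c+\rho_c}$ rather than the $\frac{f_c}{2(f_c+\rho_c)}$ displayed in \eqref{t_energy} (the paper's computation \eqref{est_3_2} drops the factor $\rho_c^{-1}$), a harmless slip that affects neither the structure of the argument nor the equivalence $\me\approx\ml$.
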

\begin{proof} It follows from \eqref{ass_q} that
\[
\int_{\T^N \times \R^N} | v - v_c|^2 Q(f,f)\,dxdv \leq 0.
\]
Then by definition of the averaged quantity $v_c$, we easily obtain 
\[
\frac12\frac{d}{dt}\int_{\T^N \times \R^N} |v - v_c|^2 f\,dxdv \leq \int_{\T^N \times \R^N} (v - v_c) \cdot (u-v)f\,dxdv.
\]
We also find 
\begin{align*}
\begin{aligned}
&\frac12\frac{d}{dt}\int_{\T^N} \rho|u - m_c|^2 dx + \mu\int_{\T^N} |\nabla u|^2 dx + (\mu + \lambda)\int_{\T^N} |\nabla \cdot u|^2 dx \cr
&\qquad = \int_{\T^N} (\nabla \cdot u)p(\rho)\,dx - \int_{\T^N \times \R^N} (u - m_c)\cdot (u-v) f \,dxdv.
\end{aligned}
\end{align*}
This yields 
\begin{align}\label{est_3_1}
\begin{aligned}
&\frac12\frac{d}{dt}\lt( \int_{\T^N \times \R^N} |v - v_c|^2 f\,dxdv + \int_{\T^N} \rho|u - m_c|^2 dx + \frac{2}{\gamma - 1}\int_{\T^N} \rho^\gamma dx\rt) \cr
&\quad + \mu \int_{\T^N} |\nabla u|^2 dx + (\mu + \lambda)\int_{\T^N} |\nabla \cdot u|^2 dx + \int_{\T^N \times \R^N} |u-v|^2 f\,dxdv\cr
&\qquad \leq (m_c - v_c)\cdot \int_{\T^N \times \R^N} (u-v)f\,dxdv,
\end{aligned}
\end{align}
where we used
\[
\frac{1}{\gamma - 1}\frac{d}{dt}\int_{\T^N} \rho^\gamma dx = -\int_{\T^N} (\nabla \cdot u)\,p(\rho)\,dx.
\]
On the other hand, we use Lemma \ref{lem_el} (ii) to get
\begin{align}\label{est_3_2}
\begin{aligned}
\frac{d}{dt}|m_c - v_c|^2 &= 2(m_c - v_c)\cdot (m_c' - v_c')\cr
&=2\lt(1 + \frac{\rho_c}{f_c} \rt)m_c' \cdot(m_c - v_c) \cr
&= -2\lt(\frac{f_c + \rho_c}{f_c} \rt)(m_c - v_c)\cdot \int_{\T^N \times \R^N}(u-v)f\,dxdv
\end{aligned}
\end{align}
Finally, we combine the above estimates \eqref{est_3_1} and \eqref{est_3_2} to complete the proof.
\end{proof}
Next we recall the quantitative estimate on the pressure $p(\rho)$.
\begin{lemma}\label{lem_pre}
\emph{\cite{FZZ}} (i) Let $r_0, \bar{r} > 0$ and $\gamma > 1$ be
given constants, and set
\[
f(r;r_0) := r\int_{r_0}^{r} \frac{h^{\gamma} - r_0^{\gamma}}{h^2} \,dh,
\]
for $r \in [0,\bar{r}]$. Then, there exist positive constants
$C_1$ and $C_2$ such that
\[
C_1(r_0, \bar{r})(r - r_0)^2 \leq f(r;r_0) \leq C_2(r_0, \bar{r})(r - r_0)^2 \quad \mbox{for all } r \in [0,\bar{r}].
\]
(ii) It holds 
\[
\frac{1}{\gamma-1}\frac{d}{dt}\int_{\T^N} \rho^\gamma dx = \frac{d}{dt}\int_{\T^N} f(\rho;\rho_c) \,dx.
\]
\end{lemma}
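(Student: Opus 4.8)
The plan is to treat the two parts separately, the first being the substantive one. For part (i) I would first record the pointwise differential structure of $f(\cdot\,;r_0)$. Differentiating the defining integral (using that the lower limit $r_0$ is fixed) gives
\[
f'(r;r_0) = \int_{r_0}^r \frac{h^\gamma - r_0^\gamma}{h^2}\,dh + \frac{r^\gamma - r_0^\gamma}{r}, \qquad f''(r;r_0) = \gamma\, r^{\gamma - 2},
\]
so that $f(r_0;r_0) = f'(r_0;r_0) = 0$ while $f'' > 0$ on $(0,\bar r]$. In particular $f(\cdot\,;r_0)$ is the relative energy (Bregman divergence) associated with the strictly convex potential $U(r) = r^\gamma/(\gamma - 1)$, namely $f(r;r_0) = U(r) - U(r_0) - U'(r_0)(r - r_0)$, which makes transparent that $f(r;r_0) > 0$ for every $r \in [0,\bar r]$ with $r \neq r_0$.

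The two-sided bound would then follow from a continuity-and-compactness argument. Setting
\[
G(r) := \frac{f(r;r_0)}{(r - r_0)^2}, \qquad r \in [0,\bar r] \setminus \{r_0\},
\]
I would show that $G$ extends continuously to all of $[0,\bar r]$: since $f(r_0;r_0) = f'(r_0;r_0) = 0$, a Taylor expansion (or L'H\^opital applied twice) gives $G(r) \to \tfrac12 f''(r_0;r_0) = \tfrac{\gamma}{2}r_0^{\gamma - 2} > 0$ as $r \to r_0$. By the positivity of the Bregman divergence, $G > 0$ on $[0,\bar r] \setminus \{r_0\}$, and the removable value at $r_0$ is also strictly positive. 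Hence $G$ is continuous and strictly positive on the compact interval $[0,\bar r]$, so it attains a positive minimum $C_1(r_0,\bar r)$ and a finite maximum $C_2(r_0,\bar r)$; these are precisely the asserted constants. The only place requiring care is the endpoint $r = 0$, where the integrand of $f$ is singular, but the explicit antiderivative $f(r;r_0) = \tfrac{r^\gamma}{\gamma-1} - \tfrac{\gamma}{\gamma-1}r_0^{\gamma-1}r + r_0^\gamma$ shows $G(0) = r_0^{\gamma-2} > 0$, so no difficulty arises there.

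For part (ii) I would use the explicit form just recorded together with the conservation of mass. Since the reference density is $\rho_c = \rho_c(0)$, which is constant in $t$ by Lemma \ref{lem_energy} (i) and Remark \ref{en_rmk2}, one has
\[
\int_{\T^N} f(\rho;\rho_c)\,dx = \frac{1}{\gamma - 1}\int_{\T^N}\rho^\gamma\,dx - \frac{\gamma}{\gamma-1}\rho_c^{\gamma-1}\int_{\T^N}\rho\,dx + \rho_c^\gamma |\T^N|.
\]
Differentiating in time and invoking $\tfrac{d}{dt}\int_{\T^N}\rho\,dx = 0$ eliminates the second and third terms, leaving the claimed identity. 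Equivalently, differentiating under the integral sign gives $\tfrac{d}{dt}\int_{\T^N} f(\rho;\rho_c)\,dx = \tfrac{\gamma}{\gamma-1}\int_{\T^N}(\rho^{\gamma-1} - \rho_c^{\gamma-1})\rho_t\,dx$, and the $\rho_c^{\gamma-1}$ contribution again vanishes by mass conservation.

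The main obstacle is the uniform lower bound in part (i), i.e.\ producing a strictly positive $C_1$ valid all the way up to the vacuum endpoint $r = 0$; everything reduces to establishing continuity and strict positivity of $G$ on the closed interval, after which compactness does the rest. Part (ii) is then immediate from mass conservation and the fact that $\rho_c$ is time-independent.
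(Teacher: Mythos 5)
Your proposal is correct. Note that the paper itself offers no proof of this lemma: it is quoted verbatim from \cite{FZZ} (for part (i)) together with the mass-conservation observation for part (ii), so there is no in-paper argument to compare against. Your reconstruction is the standard one and is sound: the closed-form expression
\[
f(r;r_0)=\frac{r^\gamma}{\gamma-1}-\frac{\gamma}{\gamma-1}r_0^{\gamma-1}r+r_0^\gamma
\]
is computed correctly, identifies $f(\cdot\,;r_0)$ as the Bregman divergence of the strictly convex potential $U(r)=r^\gamma/(\gamma-1)$, and the two-sided quadratic bound then follows from continuity and strict positivity of $G(r)=f(r;r_0)/(r-r_0)^2$ on the compact interval $[0,\bar r]$, with the removable singularity at $r=r_0$ handled by $G(r)\to\tfrac{\gamma}{2}r_0^{\gamma-2}>0$ and the vacuum endpoint by $G(0)=r_0^{\gamma-2}>0$. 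You were right to flag $r=0$ as the only delicate point (the raw integrand is singular there, but the product with the prefactor $r$ is finite), and your part (ii) is exactly the intended argument: the terms linear and constant in $\rho$ drop out upon time differentiation because $\tfrac{d}{dt}\int_{\T^N}\rho\,dx=0$ and $\rho_c$ is time-independent (Remark \ref{en_rmk2}). The one implicit hypothesis worth making explicit is $r_0>0$ (guaranteed in the application by the assumption $\rho_c(0)>0$), since both limiting values of $G$ degenerate as $r_0\to 0$ when $\gamma<2$ and blow up when $\gamma>2$, so the constants genuinely depend on $r_0$ as the statement indicates.
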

We reset our temporal interacting energy functional $\mathcal{E}$ as follows.
\begin{align*}
\begin{aligned}
\mathcal{E}(f,\rho,u)&:= \int_{\T^N \times \R^N} |v - v_c|^2 f\,dxdv + \int_{\T^N} \rho|u - m_c|^2 dx + 2\int_{\T^N}\rho \int_{\rho_c}^\rho \frac{h^\gamma - \rho_c^\gamma}{h^2}\,dh dx \cr
&\quad + \frac{f_c}{2(f_c + \rho_c)}|m_c - v_c|^2.
\end{aligned}
\end{align*}
Then it is clear from Lemma \ref{lem_pre} (ii) that 
\[
\frac12\frac{d}{dt}\mathcal{E}(f,\rho,u) + \mathcal{D}(f,\rho,u) \leq 0.
\]
Furthermore, it is obvious to get $\me(f,\rho,u) \approx \ml(f,\rho,u)$ in the sense that there exists a positive constant $C > 0$ such that
\bq\label{equiv}
\frac1C \ml(f(t),\rho(t),u(t)) \leq \me(f(t),\rho(t),u(t)) \leq C \ml (f(t),\rho(t),u(t)), \quad \mbox{for all} \quad t \geq 0,
\eq
due to Lemma \ref{lem_pre} (i). 

We now estimate the relation between dissipation $\md$ and Lyapunov functional $\ml$. We remark that if it is possible to get the following relation:
\[
\ml(f,\rho,u) \leq C\md(f,\rho,u) \quad \mbox{for some} \quad C > 0,
\]
then this directly concludes our desired exponential decay estimate of the functional $\ml$. Unfortunately, this does not hold due to the pressure term. To be more precise, we can not extract the desired dissipation rate of the density in $\mathcal{D}$. On the other hand, if we consider the pressureless viscous fluid \cite{Pere} or inhomogeneous fluid \cite{CK1, WY} instead of the compressible Navier-Stokes equations, we can show that the exponential alignment between particles and fluid velocities by using the Lyapunov functional $\ml$ and the energy functional $\me$ without the terms related to the pressure. In fact, the lemma below shows the Lyapunov functional $\ml$ is bounded by sum of the dissipation $\md$ and that term related to the pressure from above.
\begin{lemma}\label{lem_cri0} There exists a positive constant $C > 0$ such that
\[
\ml(f(t),\rho(t),u(t)) \leq C\md(f(t),\rho(t),u(t)) + \int_{\T^N} (\rho - \rho_c)^2\,dx\quad \mbox{for all} \quad t \geq 0,
\]
where $C$ is a positive constant depending on $f_c, \rho_c, \bar \rho$, and $\|\rho_f\|_{L^\infty(\R_+;L^{N/2}(\T^N))}$.

\end{lemma}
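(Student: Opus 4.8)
The plan is to bound the four terms of $\ml$ one at a time. Since the third term $\int_{\T^N}(\rho-\rho_c)^2\,dx$ already appears on the right-hand side, it suffices to control the remaining three pieces, namely $\int_{\T^N\times\R^N}|v-v_c|^2 f\,dxdv$, $\int_{\T^N}\rho|u-m_c|^2\,dx$, and $|v_c-m_c|^2$, purely by the dissipation $\md$. Since $\int_{\T^N\times\R^N}|u-v|^2 f\,dxdv$ is itself part of $\md$, the whole strategy reduces to transferring the fluid-gradient control contained in $\md$ to the remaining quantities via Poincar\'e- and Sobolev-type inequalities; this is where the hypothesis $N\geq 3$ enters.

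First I would treat $\int_{\T^N}\rho|u-m_c|^2\,dx$. Because $m_c$ is the $\rho$-weighted mean of $u$, it minimizes $a\mapsto\int_{\T^N}\rho|u-a|^2\,dx$, so $\int_{\T^N}\rho|u-m_c|^2\,dx\le\int_{\T^N}\rho|u-\bar u|^2\,dx\le\bar\rho\,\|u-\bar u\|_{L^2}^2$, where $\bar u$ denotes the ordinary mean value of $u$ on $\T^N$. Poincar\'e's inequality then bounds this by $C\bar\rho\,\|\nabla u\|_{L^2}^2\le C\md$. Next, for the particle term I would decompose $v-v_c=(v-u)+(u-m_c)+(m_c-v_c)$, use $|v-v_c|^2\le 3(|v-u|^2+|u-m_c|^2+|m_c-v_c|^2)$, integrate against $f$, and apply $\int_{\R^N}f\,dv=\rho_f$ and $\int_{\T^N}\rho_f\,dx=f_c$, which reduces matters to controlling $\int_{\T^N}\rho_f|u-m_c|^2\,dx$ and $|m_c-v_c|^2$. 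The weighted term I would estimate by H\"older's inequality with the conjugate pair $(N/2,\,N/(N-2))$, giving $\int_{\T^N}\rho_f|u-m_c|^2\,dx\le\|\rho_f\|_{L^{N/2}}\,\|u-m_c\|_{L^{2N/(N-2)}}^2$; writing $u-m_c=(u-\bar u)+(\bar u-m_c)$ and invoking the critical Sobolev embedding $H^1(\T^N)\hookrightarrow L^{2N/(N-2)}(\T^N)$ for the zero-mean part, together with the elementary bound $|\bar u-m_c|=\rho_c^{-1}\lvert\int_{\T^N}\rho(u-\bar u)\,dx\rvert\le C\bar\rho\,\rho_c^{-1}\|\nabla u\|_{L^2}$ for the constant part, yields $\int_{\T^N}\rho_f|u-m_c|^2\,dx\le C\md$.

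The main obstacle is the last term $|v_c-m_c|^2$, which is not a priori a velocity gradient and so requires a separate idea. The key is the algebraic identity
\[
v_c-m_c=\frac{1}{f_c}\int_{\T^N\times\R^N}(v-u)f\,dxdv+\frac{1}{f_c}\int_{\T^N}\rho_f(u-m_c)\,dx,
\]
which follows from $f_c v_c=\int_{\T^N\times\R^N} vf\,dxdv$ and the cancellation $\int_{\T^N}\rho_f\,dx=f_c$ that removes the constant $m_c$. Cauchy--Schwarz then gives
\[
|v_c-m_c|\le f_c^{-1/2}\Big(\int_{\T^N\times\R^N}|u-v|^2 f\,dxdv\Big)^{1/2}+f_c^{-1/2}\Big(\int_{\T^N}\rho_f|u-m_c|^2\,dx\Big)^{1/2},
\]
and both pieces are $\lesssim\md^{1/2}$ by the previous step, so $|v_c-m_c|^2\le C\md$.

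Combining these estimates, feeding $|v_c-m_c|^2\le C\md$ and $\int_{\T^N}\rho_f|u-m_c|^2\,dx\le C\md$ back into the bound for $\int_{\T^N\times\R^N}|v-v_c|^2 f\,dxdv$, and absorbing all constants into a single $C$ depending on $f_c,\rho_c,\bar\rho$, and $\|\rho_f\|_{L^\infty(\R_+;L^{N/2}(\T^N))}$, completes the proof. The two delicate points, as anticipated, are the low integrability of $\rho_f$ (only $L^{N/2}$), which forces the use of the critical Sobolev exponent $2N/(N-2)$ and hence $N\ge 3$, and the cancellation identity that expresses the average gap $v_c-m_c$ entirely through the dissipative quantity $\int_{\T^N\times\R^N}|u-v|^2 f\,dxdv$.
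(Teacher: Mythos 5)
Your proof is correct, and its core coincides with the paper's: both arguments ultimately rest on the weighted Poincar\'e--Sobolev estimate
\[
\int_{\T^N}\rho_f|u-m_c|^2\,dx \leq \|\rho_f\|_{L^{N/2}}\|u-m_c\|_{L^{2N/(N-2)}}^2 \leq C\|\nabla u\|_{L^2}^2,
\]
obtained by splitting off the mean of $u$ and using $H^1\hookrightarrow L^{2N/(N-2)}$, which is exactly where $N\geq 3$ and the $L^\infty_t L^{N/2}_x$ bound on $\rho_f$ enter in the paper as well. Where you diverge is in how the terms $\int|v-v_c|^2f$ and $|v_c-m_c|^2$ are extracted. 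The paper expands the dissipative quantity $\int_{\T^N\times\R^N}|u-v|^2f\,dxdv$ as a quadratic form in $(u-m_c)+(m_c-v_c)+(v_c-v)$, kills the cross term $\int(m_c-v_c)\cdot(v_c-v)f\,dxdv=0$ by the definition of $v_c$, and absorbs the remaining cross terms by Cauchy--Schwarz to obtain the \emph{lower} bound $\int|u-v|^2f \geq -3\int|u-m_c|^2f+\tfrac{f_c}{2}|m_c-v_c|^2+\tfrac12\int|v-v_c|^2f$, which delivers both quantities simultaneously. You instead bound each term of $\ml$ from above separately, handling $|v_c-m_c|$ via the explicit momentum identity $f_c(v_c-m_c)=\int(v-u)f\,dxdv+\int\rho_f(u-m_c)\,dx$ and plain Cauchy--Schwarz; your observation that $m_c$ minimizes $a\mapsto\int\rho|u-a|^2dx$ also shortcuts the paper's two-step estimate of $\int\rho|u-m_c|^2dx$. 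The two routes are logically equivalent in strength; yours is slightly more transparent term by term, while the paper's single quadratic expansion is more compact and reuses the same cancellation that drives the energy estimate elsewhere in the argument.
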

\begin{proof} By adding and subtracting, we find
\begin{align*}
\begin{aligned}
&\int_{\T^N \times \R^N} |u-v|^2 f \,dxdv\cr
&\qquad = \int_{\T^N \times \R^N} |u-m_c|^2 f\,dxdv + f_c |m_c - v_c|^2 + \int_{\T^N \times \R^N} |v - v_c|^2 f\,dxdv\cr
&\qquad \quad + 2\int_{\T^N \times \R^N} (u - m_c) \cdot (m_c - v_c)f\,dxdv + 2\int_{\T^N \times \R^N} (u - m_c) \cdot (v_c - v)f\,dxdv,\cr
&\qquad \geq -3\int_{\T^N \times \R^N} |u-m_c|^2 f\,dxdv + \frac{f_c}{2}|m_c - v_c|^2 + \frac12\int_{\T^N \times \R^N} |v - v_c|^2 f\,dxdv,
\end{aligned}
\end{align*}
where we used the Cauchy-Schwarz inequality and 
\[
\int_{\T^N \times \R^N} (m_c - v_c)\cdot (v_c - v) f\,dxdv = 0.
\]
This implies
\[
\frac{f_c}{2}|m_c - v_c|^2 + \frac12\int_{\T^N \times \R^N} |v - v_c|^2 f\,dxdv \leq \int_{\T^N \times \R^N} |u-v|^2 f \,dxdv + 3\int_{\T^N \times \R^N} |u-m_c|^2 f\,dxdv.
\]
We now claim that 
\[
\int_{\T^N \times \R^N} |u-m_c|^2 f\,dxdv \leq C\int_{\T^N}|\nabla u|^2 dx,
\]
for some positive constant  $C > 0$. For the proof of claim, we first divide the term in the left hand side of the above inequality into two parts:
\begin{align}\label{est_3_3}
\begin{aligned}
\int_{\T^N \times \R^N} |u-m_c|^2 f\,dxdv &\leq 2\int_{\T^N \times \R^N} |u-u_c|^2 f\,dxdv + 2f_c|u_c - m_c|^2\cr
&=: I_1 + I_2,
\end{aligned}
\end{align}
where $I_i,i=1,2$ are estimated as 
\begin{align}\label{est_3_4}
\begin{aligned}
I_1 & = 2\int_{\T^N} |u - u_c|^2 \rho_f\,dx \leq 2\lt(\int_{\T^N} |u - u_c|^{\frac{2N}{N-2}} dx \rt)^{\frac{N-2}{N}}\lt( \int_{\T^N} \rho_f^{\frac{N}{2}} dx\rt)^{\frac{2}{N}}\cr
&\leq C\|u-u_c\|_{L^{\frac{2N}{N-2}}}^2\|\rho_f\|_{L^{\frac{N}{2}}} \leq C\|\nabla u\|_{L^2}^2,\cr
I_2 & = 2f_c \lt| \frac{1}{\rho_c} \int_{\T^N} \rho u_c\,dx - \frac{1}{\rho_c} \int_{\T^N} \rho u\,dx \rt|^2 = 2f_c \lt| \frac{1}{\rho_c} \int_{\T^N} \rho (u_c - u)\,dx\rt|^2\cr
&\leq \frac{2f_c}{\rho_c^2}\lt( \int_{\T^N} \rho |u - u_c|^2 dx\rt) \lt(\int_{\T^N} \rho\,dx \rt)\leq \frac{2f_c \bar\rho}{\rho_c}\int_{\T^N} | u - u_c|^2 dx \leq C\|\nabla u\|_{L^2}^2,
\end{aligned}
\end{align}
where $\rho_f = \int_{\R^N} f\,dv$, and we used 
\[
\|u - u_c\|_{L^{\frac{2N}{N-2}}} \leq C\|u - u_c\|_{H^1} \leq C\|\nabla u\|_{L^2}.
\]
Combining \eqref{est_3_3} and \eqref{est_3_4}, we get
\[
\int_{\T^N \times \R^N} |u - m_c|^2 f\,dxdv \leq C\int_{\T^N} |\nabla u|^2 dx,
\]
where $C$ is a positive constant depending on $f_c, \,\rho_c, \,\bar \rho$, and $\|\rho_f\|_{L^\infty(\R_+;L^{N/2}(\T^N))}$. Thus, we obtain
\[
\frac{f_c}{2}|m_c - v_c|^2 + \frac12\int_{\T^N \times \R^N} |v - v_c|^2 f\,dxdv \leq \int_{\T^N \times \R^N} |u-v|^2 f \,dxdv + C\int_{\T^N} |\nabla u|^2 dx,
\]
and this deduces
\[
\ml(f,\rho,u) \leq C\lt( \md(f,\rho,u) +  \int_{\T^N} \rho |u - m_c|^2 \,dx\rt) + \int_{\T^N} (\rho - \rho_c)^2\,dx.
\]
Then we again use the similar argument as the above to find
\[
\int_{\T^N} \rho |u - m_c|^2 \,dx \leq C\bar\rho \int_{\T^N} |\nabla u|^2 dx.
\]
This concludes 
\[
\ml(f(t),\rho(t),u(t)) \leq C \md(f(t),\rho(t),u(t)) + \int_{\T^N} (\rho - \rho_c)^2\,dx, \quad t \geq 0,
\]
where $C$ is a positive constant depending on $f_c,\, \rho_c, \,\bar \rho$, and $\|\rho_f\|_{L^\infty(\R_+;L^{N/2}(\T^N))}$.
\end{proof}
Lemma \ref{lem_cri0} implies that we only need to focus on obtaining the proper dissipation rate of $\|\rho - \rho_c\|_{L^2}^2$. For this, we use the periodic version of Bogovskii's argument stated in Proposition \ref{prop_bogo}. We define perturbed interacting energy function $\mathcal{E}^\sigma(f,\rho,u)$ and dissipation $\mathcal{D}^\sigma(f,\rho,u)$ as follows.
\begin{align*}
\begin{aligned}
\mathcal{E}^\sigma(f,\rho,u) &:= \mathcal{E}(f,\rho,u) - 2\sigma\int_{\T^N} \rho(u-m_c)\cdot \mb[\rho - \rho_c]\,dx\cr
\mathcal{D}^\sigma(f,\rho,u) &:= \mathcal{D}(f,\rho,u) + \sigma\int_{\T^N} \rho u \cdot \nabla((u - m_c)\cdot \mb[\rho- \rho_c])dx \cr
&\quad - \sigma\int_{\T^N} \rho(u \cdot \nabla u) \cdot \mb[\rho- \rho_c]\,dx + \sigma\int_{\T^N} (\rho - \rho_c)\lt(\rho^\gamma - \rho_c^\gamma \rt)dx\cr
&\quad -\sigma\mu\int_{\T^N} \nabla u : \nabla \mb[\rho- \rho_c]\,dx + \sigma\int_{\T^N \times \R^N} (u-v) \cdot \mb[\rho- \rho_c] f\,dxdv\cr
&\quad - \sigma(\mu + \lambda)\int_{\T^N} (\nabla \cdot u) (\rho- \rho_c) \,dx - \sigma m_c' \cdot \int_{\T^N} \rho\mb[\rho- \rho_c]\,dx\cr
&\quad - \sigma\int_{\T^N}\rho(u - m_c)\cdot\mb[\nabla \cdot (\rho u)]\,dx\cr
&=:\sum_{i=1}^9 J_i.
\end{aligned}
\end{align*}
It is clear to get 
\[
\frac12\frac{d}{dt}\mathcal{E}^\sigma(f,\rho,u) + \mathcal{D}^\sigma(f,\rho,u) \leq 0.
\]
We notice that the perturbed interacting energy functional $\me^\sigma(f,\rho,u)$ satisfies $\me^\sigma(f,\rho,u) \approx \ml (f,\rho,u)$ for $\sigma > 0$ small enough, i.e., there exists a positive constant $C > 0$ such that 
\bq\label{equiv2}
\frac1C \ml(f,\rho,u) \leq \me^\sigma(f,\rho,u) \leq C \ml (f,\rho,u), 
\eq
due to 
\[
2\sigma\int_{\T^N} \rho(u-m_c)\cdot \mb[\rho - \rho_c]\,dx \leq C\bar\rho\sigma\int_{\T^N} \rho|u - m_c|^2 dx + C\sigma \int_{\T^N} (\rho - \rho_c)^2dx,
\]
and the relation \eqref{equiv}.

We next show that the perturbed dissipation $\mathcal{D}^\sigma$ dominates the dissipation rate $\mathcal{D}$ and $\|\rho - \rho_c\|_{L^2}^2$ under smallness assumptions on $\sigma$ and $E_0$.

\begin{lemma}\label{lem_cri} There exists a positive constant $C > 0$ independent of $t$ such that 
\[
\int_{\T^N} |\nabla u|^2 dx + \int_{\T^N} |\nabla \cdot u|^2 dx + \int_{\T^N \times \R^N} |u-v|^2 f\,dxdv + \int_{\T^N} (\rho - \rho_c)^2dx \leq C\mathcal{D}^\sigma (f,\rho,u),
\]
for sufficiently small $\sigma, \,E_0 > 0$.
\end{lemma}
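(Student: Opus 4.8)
The plan is to show that $\md^\sigma=\md+\sum_{i=2}^{9}J_i$ is coercive by isolating $J_4$ as the \emph{only} source of density dissipation and treating $J_2,J_3,J_5,J_6,J_7,J_8,J_9$ as lower-order perturbations. First I would record the coercivity of $J_4=\sigma\int_{\T^N}(\rho-\rho_c)(\rho^\gamma-\rho_c^\gamma)\,dx$. Since $r\mapsto r^\gamma$ is convex for $\gamma>1$, the difference quotient $(r^\gamma-\rho_c^\gamma)/(r-\rho_c)$ is nondecreasing in $r$, so for $\rho\in[0,\bar\rho]$ and $\rho_c>0$ one has $(\rho-\rho_c)(\rho^\gamma-\rho_c^\gamma)\geq \rho_c^{\gamma-1}(\rho-\rho_c)^2$; equivalently one may quote Lemma \ref{lem_pre}(i). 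Hence $J_4\geq \sigma c_0\int_{\T^N}(\rho-\rho_c)^2\,dx$ for a fixed $c_0=c_0(\rho_c,\bar\rho,\gamma)>0$, and the whole task reduces to absorbing the remaining $J_i$ into $\md$ and into one half of $J_4$.

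For the perturbations I would use Proposition \ref{prop_bogo} throughout. Part (1) gives $\|\mb[\rho-\rho_c]\|_{H^1}\leq C\|\rho-\rho_c\|_{L^2}$, and together with the Sobolev embedding $H^1(\T^N)\hookrightarrow L^{2N/(N-2)}(\T^N)$ this controls $\mb[\rho-\rho_c]$ in $L^2$, in $L^{2N/(N-2)}$, and its gradient in $L^2$, all by $C\|\rho-\rho_c\|_{L^2}$. The terms $J_3,J_5,J_7$ are then immediate: using $\rho\leq\bar\rho$ and $u\in L^\infty$ for $J_3$, and Cauchy–Schwarz for $J_5,J_7$, each is bounded by $\sigma C$ times a product $\|\nabla u\|_{L^2}\|\rho-\rho_c\|_{L^2}$ (or $\|\nabla\cdot u\|_{L^2}\|\rho-\rho_c\|_{L^2}$). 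For $J_6$ I would pass to $\int_{\T^N}|\mb[\rho-\rho_c]|^2\rho_f\,dx$ and apply H\"older with exponents $N/(N-2)$ and $N/2$, the Sobolev bound, and assumption (i), producing $\sigma C\|\rho_f\|_{L^{N/2}}^{1/2}\big(\int_{\T^N\times\R^N}|u-v|^2f\,dxdv\big)^{1/2}\|\rho-\rho_c\|_{L^2}$. For $J_8$ I would use $|m_c'|\leq (f_c/\rho_c)^{1/2}\big(\int_{\T^N\times\R^N}|u-v|^2f\,dxdv\big)^{1/2}$ from Lemma \ref{lem_el}(i) and $|\int_{\T^N}\rho\,\mb[\rho-\rho_c]\,dx|\leq C\|\rho-\rho_c\|_{L^2}$. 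In every case Young's inequality splits the product into a dissipative factor ($\|\nabla u\|_{L^2}^2$, $\|\nabla\cdot u\|_{L^2}^2$, or $\int_{\T^N\times\R^N}|u-v|^2f\,dxdv$) and a multiple of $\|\rho-\rho_c\|_{L^2}^2$.

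The delicate terms are the convective term $J_2$ and the term $J_9$ coming from $\partial_t\mb[\rho-\rho_c]=\mb[-\nabla\cdot(\rho u)]$, since neither carries an obvious dissipative factor. For $J_2$ I would expand $\nabla\big((u-m_c)\cdot\mb[\rho-\rho_c]\big)$, using that $m_c$ is $x$-independent, into one piece carrying $\nabla u$ and one carrying $\nabla\mb[\rho-\rho_c]$; the crux is the stray factor $\|u-m_c\|_{L^2}$, which I would control by the Poincar\'e–Wirtinger inequality together with the bound $|u_c-m_c|\leq C\|\nabla u\|_{L^2}$ already used in Lemma \ref{lem_cri0}, so that $\|u-m_c\|_{L^2}\leq C\|\nabla u\|_{L^2}$. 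For $J_9$ I would invoke Proposition \ref{prop_bogo}(2): writing
\[
\rho u=\rho(u-m_c)+(\rho-\rho_c)m_c+\rho_c m_c,\qquad \nabla\cdot(\rho u)=\nabla\cdot\big(\rho(u-m_c)\big)+\nabla\cdot\big((\rho-\rho_c)m_c\big),
\]
the divergence-form estimate gives $\|\mb[\nabla\cdot(\rho u)]\|_{L^2}\leq C\big(\|\rho(u-m_c)\|_{L^2}+|m_c|\,\|\rho-\rho_c\|_{L^2}\big)$, and the first factor is $\leq C\|\nabla u\|_{L^2}$ by $\int_{\T^N}\rho|u-m_c|^2\,dx\leq C\|\nabla u\|_{L^2}^2$ from Lemma \ref{lem_cri0}. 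Thus $J_2$ and $J_9$ likewise reduce to products of $\|\nabla u\|_{L^2}$ with $\|\nabla u\|_{L^2}$ or $\|\rho-\rho_c\|_{L^2}$, exactly the product structure required; peeling off the constant momentum $\rho_c m_c$ is what exposes the factor $\|\rho-\rho_c\|_{L^2}$.

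Finally I would collect everything. Each $J_i$ with $i\neq 4$ is bounded by $\sigma C$ times either a dissipative quantity alone or a product (dissipative quantity)$\times\|\rho-\rho_c\|_{L^2}$. Applying Young's inequality with a parameter chosen large enough that the accumulated coefficient of $\|\rho-\rho_c\|_{L^2}^2$ stays below $\tfrac12 c_0$, and then taking $\sigma$ small, the dissipative parts are absorbed into $\md$ and the density part into one half of $J_4$, yielding $\md^\sigma\geq c\,(\md+\|\rho-\rho_c\|_{L^2}^2)$, which is the assertion. The smallness of $E_0$ (Remark \ref{en_rmk1}) enters precisely to keep the $m_c$- and $\rho u$-dependent coefficients under control, through $|m_c|^2\leq E_0/\rho_c$ (Lemma \ref{lem_el}(i)), $\int_{\T^N}\rho|u-m_c|^2\,dx\leq E_0$, and $\|\rho u\|_{L^2}^2\leq \bar\rho E_0$, so that the borderline contributions not reducible by $\sigma$ alone remain dominated by the dissipation furnished by $J_4$. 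I expect the main obstacle to be exactly the two transport-generated terms $J_2$ and $J_9$: they are the reason the Bogovskii operator—and in particular its divergence-form bound in Proposition \ref{prop_bogo}(2)—is introduced, and the only way to subordinate them is to trade their missing dissipative factor for $\|\nabla u\|_{L^2}$ via the Poincar\'e-type control of $u-m_c$.
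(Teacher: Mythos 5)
Your proposal is correct and follows essentially the same route as the paper's proof: coercivity of $J_4$ via the monotone difference quotient of $r\mapsto r^\gamma$, Bogovskii/Sobolev bounds combined with assumption (i) for $J_5$--$J_8$, the splitting $\nabla\cdot(\rho u)=\nabla\cdot(\rho(u-m_c))+\nabla\cdot((\rho-\rho_c)m_c)$ with Proposition \ref{prop_bogo} (2) for $J_9$, the bound $\int_{\T^N}\rho|u-m_c|^2dx\leq C\|\nabla u\|_{L^2}^2$, and absorption by Young's inequality for small $\sigma$ and $E_0$. The only (harmless) tactical deviation is in $J_2,J_3$: the paper integrates by parts to recombine them into $\sigma\int_{\T^N}\rho u\otimes u:\nabla\mb[\rho-\rho_c]\,dx$ plus a correction and then peels off $m_c$ twice, which is exactly where the $O(\sigma E_0\rho_c^{-1})\|\rho-\rho_c\|_{L^2}^2$ term forcing $E_0$ small appears, whereas you expand the gradient directly and use $\|u-m_c\|_{L^2}\leq C\|\nabla u\|_{L^2}$ together with $u\in L^\infty$ to retain a dissipative factor in every term, arriving at the same conclusion.
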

\begin{proof}

We first estimate of $J_2$ and $J_3$ as follows.
\begin{align*}
\begin{aligned}
J_2 &= - \sigma\int_{\T^N} \nabla \cdot (\rho u) \lt((u-m_c)\cdot \mb[\rho - \rho_c]\rt)dx,\cr
J_3 &= - \sigma\int_{\T^N} \lt(\nabla \cdot (\rho u\otimes u)\rt)\cdot \mb[\rho- \rho_c]\,dx + \sigma\int_{\T^N} \lt(\nabla \cdot (\rho u) \,u \rt)\cdot \mb[\rho- \rho_c]\,dx.
\end{aligned}
\end{align*}
This yields 
\[
J_2 + J_3 = \sigma\int_{\T^N} (\rho u \otimes u) : \nabla \mb[\rho- \rho_c]\,dx + \sigma\int_{\T^N} \lt(\nabla \cdot (\rho u)\,m_c\rt) \cdot \mb[\rho- \rho_c]\,dx.
\]
Adding and subtracting, we deduce
\begin{align*}
\begin{aligned}
&\sigma\int_{\T^N} \rho(u \otimes u) : \nabla \mb[\rho- \rho_c]\,dx\cr
&\quad = \sigma\lt(\int_{\T^N} \rho(u - m_c)\otimes u : \nabla \mb[\rho- \rho_c]\,dx + \int_{\T^N} \rho m_c \otimes (u - m_c): \nabla \mb[\rho- \rho_c]\,dx\rt)\cr
&\qquad + \sigma\int_{\T^N}\rho m_c \otimes m_c : \nabla \mb[\rho- \rho_c]\,dx\cr
&\quad =: \sum_{i=1}^3 J_{2,3}^i,
\end{aligned}
\end{align*}
where $J_{2,3}^i,i=1,2,3$ are estimated by
\begin{align*}
\begin{aligned}
J_{2,3}^1 &\leq \sigma^{1/2}\bar \rho \|u\|_{L^\infty(\T^N \times \R_+)}\int_{\T^N} \rho |u - m_c|^2 dx + C \sigma^{3/2}\int_{\T^N} (\rho- \rho_c)^2 dx,\cr
J_{2,3}^2 &\leq \sigma^{1/2}\bar \rho E_0 \rho_c^{-1}\int_{\T^N} \rho |u - m_c|^2 dx + C \sigma^{3/2}\int_{\T^N} (\rho- \rho_c)^2 dx,\cr
J_{2,3}^3 &= \sigma\int_{\T^N} (\rho-\rho_c) m_c \otimes m_c : \nabla \mb[\rho- \rho_c]\,dx \leq C\sigma E_0 \rho_c^{-1} \int_{\T^N} (\rho- \rho_c)^2 dx,
\end{aligned}
\end{align*}
due to Lemma \ref{lem_el}.
Thus, we obtain
\begin{align*}
\begin{aligned}
&\sigma\int_{\T^N} \rho (u \otimes u) : \nabla \mb[\rho - \rho_c]\,dx \cr
& \,\, \leq C \sigma\lt( E_0 \rho_c^{-1} + \sigma^{1/2} \rt)\int_{\T^N} (\rho- \rho_c)^2 dx + \sigma^{1/2}\bar \rho \lt(\|u\|_{L^\infty(\T^N \times \R_+)} + E_0\rho_c^{-1}\rt)\int_{\T^N} \rho |u - m_c|^2 dx.
\end{aligned}
\end{align*}
Similarly, we find
\begin{align*}
\begin{aligned}
&\sigma\int_{\T^N} \lt(\nabla \cdot (\rho u)m_c\rt) \cdot \mb[\rho- \rho_c]\,dx\cr
&\quad = - \sigma\int_{\T^N} \rho u \cdot \lt(m_c \cdot \nabla \mb[\rho- \rho_c]\rt)dx\cr
&\quad = - \sigma\int_{\T^N} \rho (u - m_c)\cdot\lt( m_c \cdot \nabla \mb[\rho- \rho_c]\rt)dx + \sigma\int_{\T^N} \rho m_c \cdot \lt( m_c \cdot \nabla \mb[\rho- \rho_c]\rt)dx\cr
&\quad \leq \sigma^{1/2}\bar \rho E_0 \rho_c^{-1} \int_{\T^N} \rho |u - m_c|^2 dx + C\sigma\lt( E_0 \rho_c^{-1}+ \sigma^{1/2}\rt)\int_{\T^N} (\rho- \rho_c)^2dx.
\end{aligned}
\end{align*}
This deduces
\begin{align*}
\begin{aligned}
J_2 + J_3 &\leq C \sigma\lt( E_0 \rho_c^{-1} + \sigma^{1/2} \rt)\int_{\T^N} (\rho- \rho_c)^2 dx \cr
&\quad + \sigma^{1/2}\bar \rho \lt(\|u\|_{L^\infty(\T^N \times \R_+)} + E_0\rho_c^{-1}\rt)\int_{\T^N} \rho |u - m_c|^2 dx.
\end{aligned}
\end{align*}
We next estimate $J_i,i=4,\cdots,8$ as
\begin{align*}
\begin{aligned}
J_4 &\geq C\sigma\int_{\T^N} (\rho- \rho_c)^2 dx,\cr
J_5 &\leq \frac\mu2\int_{\T^N} |\nabla u|^2 dx + C\sigma^2 \mu \int_{\T^N} (\rho- \rho_c)^2dx,\cr
J_6 &\leq \sigma\lt(\int_{\T^N \times \R^N} |u-v|^2 f\,dxdv\rt)^\frac12\lt( \int_{\T^N} |\mb[\rho- \rho_c]|^2\rho_f\,dx\rt)^\frac12\cr
&\leq \frac12\int_{\T^N \times \R^N} |u-v|^2 f\,dxdv + C\sigma^2\|\rho_f\|_{L^\infty(\R_+;L^{N/2}(\T^N))} \int_{\T^N} (\rho- \rho_c)^2dx,\cr
J_7 &\leq \frac{\mu + \lambda}{2}\int_{\T^N} |\nabla \cdot u|^2 dx + C\sigma^2(\mu + \lambda)\int_{\T^N} (\rho- \rho_c)^2 dx,\cr
J_8 &\leq \bar \rho\sigma\int_{\T^N} |m_c'||\mb[\rho- \rho_c]|\,dx \cr
&\leq \sigma^{1/2}\bar\rho f_c \rho_c^{-1}\int_{\T^N \times \R^N} |u-v|^2 f\,dxdv + C\sigma^{3/2}\int_{\T^N} (\rho- \rho_c)^2 dx,
\end{aligned}
\end{align*}
where we used for the estimates of $J_6$ and $J_8$ that
\begin{align*}
\begin{aligned}
\int_{\T^N} |\mb[\rho-\rho_c]|^2 \rho_f\,dx &\leq \lt(\int_{\T^N} |\mb[\rho - \rho_c]|^{\frac{2N}{N-2}}dx\rt)^\frac{N-2}{N}\|\rho_f\|_{L^{N/2}} \cr
&\leq \|\mb[\rho- \rho_c]\|_{H^1}^2\|\rho_f\|_{L^\infty(\R_+;L^{N/2}(\T^N))}\cr
&\leq C\|\rho_f\|_{L^\infty(\R_+;L^{N/2}(\T^N))}\int_{\T^N} (\rho- \rho_c)^2 dx,\cr
\end{aligned}
\end{align*}
and
\[
|m_c'|^2 \leq f_c \rho_c^{-1} \int_{\T^N \times \R^N} |u-v|^2 f\,dxdv,
\]
respectively. Finally, by Proposition \ref{prop_bogo}, we get
\begin{align*}
\begin{aligned}
J_9 &= -\sigma\int_{\T^N} \rho (u - m_c)\cdot \mb[\nabla \cdot (\rho(u-m_c))]\,dx  - \sigma\int_{\T^N} \rho(u - m_c)\cdot \mb[\nabla \cdot ((\rho- \rho_c) m_c)]\,dx\cr
&\leq C\sigma\bar\rho\int_{\T^N} \rho|u - m_c|^2 dx + C\sigma\bar\rho^\frac12\lt( \int_{\T^N} \rho|u - m_c|^2 dx\rt)^\frac12\lt(\int_{\T^N} (\rho- \rho_c)^2 |m_c|^2dx\rt)^\frac12\cr
&\leq C(\sigma + \sigma^{1/2})\bar \rho\int_{\T^N}\rho|u - m_c|^2 dx + C\sigma^{3/2}E_0 \rho_c^{-1}\int_{\T^N} (\rho- \rho_c)^2 dx.
\end{aligned}
\end{align*}
Summing up the above estimates, we obtain
\begin{align*}
\begin{aligned}
\mathcal{D}^\sigma(f,\rho,u) &\geq \frac{\mu}{2}\int_{\T^N} |\nabla u|^2 dx + \frac{(\mu + \lambda)}{2}\int_{\T^N} |\nabla \cdot u|^2 dx \cr
&\quad + \lt(\frac12 - \sigma^{1/2}\bar\rho f_c \rho_c^{-1}\rt)\int_{\T^N} |u-v|^2 f\,dxdv\cr
&\quad + C_1\sigma\int_{\T^N} (\rho- \rho_c)^2 dx - C_2\sigma^{1/2}\int_{\T^N} \rho|u - m_c|^2 dx,
\end{aligned}
\end{align*}
where $C_1$ and $C_2$ are given by
\[
C_1 := C\lt( C - E_0\rho_c^{-1} - (1 + \bar \rho)\sigma^{1/2} - \lt(\mu + \lambda + \|\rho_f\|_{L^\infty(\R_+;L^{N/2}(\T^N))}\rt)\sigma \rt),
\]
and
\[
C_2:= C\bar\rho \lt( E_0 \rho_c^{-1} + \|u\|_{L^\infty(\T^N \times \R_+)} + \sigma^{1/2} + 1\rt),
\]
respectively. Here the constant $C > 0$ is a positive constant independent of $\sigma$, $E_0$, and $t$. Note that 
constant $C_1$ can be positive if we choose $\sigma > 0$ and $E_0 > 0$ small enough. 

Furthermore, we estimate 
\[
\int_{\T^N} \rho|u - m_c|^2dx \leq C\bar\rho\int_{\T^N}|\nabla u|^2 dx \quad \mbox{for some} \quad C > 0,
\]
by using the similar argument to \eqref{est_3_4}. This yields
\begin{align*}
\begin{aligned}
\mathcal{D}^\sigma(f,\rho,u) &\geq \lt(\frac{\mu}{2} - C C_2\bar\rho \sigma^{1/2}\rt)\int_{\T^N} |\nabla u|^2 dx + \frac{(\mu + \lambda)}{2}\int_{\T^N} |\nabla \cdot u|^2 dx \cr
&\quad + \lt(\frac12 - \sigma^{1/2}\bar\rho f_c \rho_c^{-1}\rt)\int_{\T^N \times \R^N} |u-v|^2 f\,dxdv+C_1\sigma\int_{\T^N} (\rho- \rho_c)^2 dx,
\end{aligned}
\end{align*}
and then we take $\sigma > 0$ and $E_0 > 0$ small enough to have
\[
\frac{\mu}{2} - C C_2\bar\rho \sigma^{1/2} > 0, \quad \frac12 - \sigma^{1/2}\bar\rho f_c \rho_c^{-1} > 0, \quad \mbox{and} \quad C_1 > 0.
\]
This completes the proof.
\end{proof}

\begin{proof}[Proof of Theorem \ref{thm_main}] 
Combining the estimates in Lemmas \ref{lem_cri0} and \ref{lem_cri}, we obtain that there exists a positive constant $C > 0$ such that
\begin{align*}
\begin{aligned}
\ml(f,\rho,u) &= \int_{\T^N \times \R^N} |v - v_c|^2 f\,dxdv + \int_{\T^N} \rho|u - m_c|^2dx + \int_{\T^N} (\rho - \rho_c)^2dx + |v_c - m_c|^2\cr
&\leq C\lt(\int_{\T^N} |\nabla u|^2 dx + \int_{\T^N} |\nabla \cdot u|^2 dx + \int_{\T^N \times \R^N} |u-v|^2 fdx\rt) + \int_{\T^N} (\rho - \rho_c)^2dx\cr
&\leq C\mathcal{D}^\sigma (f,\rho,u),
\end{aligned}
\end{align*}
for sufficiently small $\sigma > 0$ and $E_0 > 0$. Then by using this inequality and together with \eqref{equiv2}, we have
\[
\frac{d}{dt}\me^\sigma(f,\rho,u) + C\me^\sigma(f,\rho,u) \leq 0 \quad \mbox{for some} \quad C>0,
\]
and this concludes 
\[
\ml(f,\rho,u) \leq C\me^\sigma(f,\rho,u) \leq C\me^\sigma(f_0,\rho_0,u_0) e^{-Ct} \leq C\ml(f_0,\rho_0,u_0)e^{-Ct}, \quad t\geq 0,
\]
where $C >0$ is a positive constant independent of $t$.
\end{proof}

\section*{Acknowledgments}
{\small The author warmly thanks to the anonymous referee for helpful comments. The author was supported by Engineering and Physical Sciences Research Council(EP/K008404/1) and the ERC-Starting grant HDSPCONTR ``High-Dimensional Sparse Optimal Control''. The author is also supported by the Alexander von Humboldt Foundation.}


\begin{thebibliography}{10}

\bibitem{BCHK1} H.-O. Bae, Y.-P. Choi, S.-Y. Ha, and M.-J. Kang, Time-asymptotic interaction of flocking particles and an incompressible viscous fluid, Nonlinearity, 24, (2012), 1155--1177.

\bibitem{BCHK2} H.-O. Bae, Y.-P. Choi, S.-Y. Ha, and M.-J. Kang, Global existence of strong solution for the Cucker-Smale-Navier-Stokes system, J. Diff. Eq., 257, (2014), 2225--2255.

\bibitem{BCHK} H.-O. Bae, Y.-P. Choi, S.-Y. Ha, and M.-J. Kang, Asymptotic flocking dynamics of Cucker-Smale particles immersed in compressible fluids, Disc. Cont. Dyn. Sys. A, 34, (2014), 4419--4458.

\bibitem{BBJM} C. Baranger, L. Boudin, P.-E. Jabin, and S. Mancini, A modelling of biospray for the upper airways, CEMRACS 2004 - mathematics and applications to biology and medicine, ESAIM Proc., 14, (2005), 41--47.

\bibitem{BD} C. Baranger and L. Desvillettes, Coupling Euler and Vlasov equations in the context of sprays: the local-in-time, classical solutions, J. Hyp. Diff. Eq., 3, (2006), 1--26.

\bibitem{Bogo} M. E. Bogovskii, Solution of some vector analysis problems connected with operators div and grad (in Russian), Trudy Sem. S. L. Sobolev, 80, (1980), 5--40.

\bibitem{BDGM} L. Boudin, L. Desvillettes, C. Grandmont, and A. Moussa, Global existence of solution for the coupled Vlasov and Navier-Stokes equations Diff. Int. Eq., 22, (2009), 1247--1271.

\bibitem{BDM} L. Boudin, L. Desvillettes, and R. Motte, A modelling of compressible droplets in a fluid, Comm. Math. Sci., 1, (2003), 657--669.
 
\bibitem{CCK} J. A. Carrillo, Y.-P. Choi, and T. K. Karper, On the analysis of a coupled kinetic-fluid model with local alignment force, Ann. I. H. Poincar\'e AN, 33, (2016), 273--307.
 
\bibitem{CDM} J. A. Carrillo, R. Duan, and A. Moussa, Global classical solutions  close to the equilibrium to the Vlasov-Fokker-Planck-Euler system, Kinetic and Related Models, 4, (2011), 227--258.

\bibitem{CKL1} M. Chae, K. Kang, and J. Lee, Global existence of weak and classical solutions for the Navier-Stokes-Fokker-Planck equations, J. Diff. Eq., 251, (2011), 2431--2465.

\bibitem{CKL2} M. Chae, K. Kang, and J. Lee, Global classical solutions for a compressible fluid-particle interaction model, J. Hyp. Diff. Eq., 10, (2013), 537--562.

\bibitem{Choi} Y.-P. Choi, Global classical solutions and large-time behavior of the two-phase fluid model, preprint.

\bibitem{CK1} Y.-P. Choi and B. Kwon, Global well-posedness and large-time behavior for the inhomogeneous Vlasov-Navier-Stokes equations, Nonlinearity, 28, (2015), 3309--3336.

\bibitem{CK2} Y.-P. Choi and B. Kwon, The Cauchy problem for the pressureless Euler/isentropic Navier-Stokes equations, J. Diff. Eq., 261, (2016), 654-711.

\bibitem{CL} Y.-P. Choi and J. Lee, Global existence of weak and strong solutions to Cucker-Smale-Navier-Stokes equations in $\R^2$, Nonlinear Anal.-Real, 27, (2016), 158--182.

\bibitem{DL} R. Duan and S. Liu, Cauchy problem on the Vlasov-Fokker-Planck equation coupled with the compressible Euler equations through the friction force, Kinetic and Related Models, 6, (2013), 687--700.

\bibitem{FZZ} D. Fang, R. Zi, T. Zhang, Decay estimates for isentropic
compressible Navier-Stokes equations in bounded domain, J. Math.
Anal. Appl., 386, (2012), 939--947.

\bibitem{Gal} G.P. Galdi, An introduction to the mathematical theory of
 the Navier-Stokes equations I, SpringerVerlag, New York, 1994.
 
 \bibitem{Majda} A. Majda, Introduction to PDEs and Waves for the Atmosphere and Ocean, Courant Lecture Notes in Mathematics, 9, AMS/CIMS, (2003). 
 
\bibitem{Ham} K. Hamdache, Global existence and large time behaviour of solutions for the Vlasov-Stokes equations, Japan J. Indust. Appl. Math., 15, (1998), 51--74.

\bibitem{Math} J. Mathiaud, Local smooth solutions of a thin spray model with collisions, Math. Models Methods Appl. Sci., 20, (2010), 191--221.

\bibitem{MV} A. Mellet and A. Vasseur, Global weak solutions for a Vlasov-Fokker-Planck/Navier-Stokes system of equations, Math. Models Methods Appl. Sci., 17, (2007), 1039--1063.
 
\bibitem{orou} P. OÕRourke, Collective drop effects on vaporising liquid sprays, Ph. D. Thesis, Princeton University, Princeton, NJ, 1981.

\bibitem{Pere} M. Perepelitsa, Model problem for the motion of a compressible, viscous flow with the no-slip boundary condition, Z. Angew. Math. Phys., 61, (2010), 267--276.

\bibitem{RM} W. Ranz and W. Marshall, Evaporation from drops, Chem. Eng. Prog., 48, (1952), 141--180.

\bibitem{WY} D. Wang and C. Yu, Global weak solutions to the inhomogeneous Navier-Stokes-Vlasov equations, J. Differ. Equ., 259, (2015), 3976--4008.

\bibitem{Will} F. A. Williams, Spray combustion and atomization, Phys. Fluids, 1, (1958), 541--555.

\bibitem{Yu} C. Yu, Global weak solutions to the incompressible Navier-Stokes-Vlasov equations, J. Math. Pures Appl., 100, (2013), 275--293.


\end{thebibliography}
\end{document}